\numberwithin{equation}{section}
\numberwithin{figure}{section}
\theoremstyle{plain}
\newtheorem{thm}{Theorem}[section]
\newtheorem{lem}[thm]{Lemma}
\newtheorem{prop}[thm]{Proposition}
\theoremstyle{definition}
\newtheorem{defn}{Definition}[section]
\newtheorem{asmp}{Assumption}[section]
\newtheorem{rmk}[defn]{Remark}
\newtheorem{exam}{Example}[section]
\newcommand{\cB}{\mathcal{B}}
\newcommand{\cD}{\mathcal{D}}
\newcommand{\cF}{\mathcal{F}}
\newcommand{\cH}{\mathcal{H}}
\newcommand{\cK}{\mathcal{K}}
\newcommand{\cP}{\mathcal{P}}
\newcommand{\cT}{\mathcal{T}}
\newcommand{\cX}{\mathcal{X}}
\newcommand{\cY}{\mathcal{Y}}
\newcommand{\bF}{\mathbf{F}}
\newcommand{\bbH}{\mathbb{H}}
\newcommand{\bbR}{\mathbb{R}}
\newcommand{\bbX}{\mathbb{X}} 
\newcommand{\bbY}{\mathbb{Y}} 
\newcommand{\bbZ}{\mathbb{Z}}
\newcommand{\scrM}{\mathscr{M}}
\newcommand{\scrP}{\mathscr{P}}
\newcommand{\scrT}{\mathscr{T}}
\newcommand{\1}{\mathbbm{1}}
\newcommand{\AW}{\mathcal{AW}}
\newcommand{\md}{\mathop{}\mathopen\mathrm{d}}
\newcommand{\E}{E}
\renewcommand{\P}{\mathbb{P}}
\newcommand{\Id}{\operatorname{Id}}
\newcommand{\proj}{\operatorname{pj}}
\newcommand{\ad}{\mathscr{U}}
\newcommand{\dom}{\operatorname{dom}}
\newcommand{\Law}{\operatorname{Law}}
\newcommand{\AP}{\mathsf{AP}}
\newcommand{\NP}{\mathsf{NP}}
\newcommand{\bav}{\mathbf{ES}_{\alpha}}
\newcommand{\av}{\mathrm{ES}_{\alpha}}
\renewcommand{\c}{\mathfrak{c}}
\newcommand{\bc}{\mathfrak{bc}}
\DeclareMathAlphabet{\mathbbmsl}{U}{bbm}{m}{sl}
\newcommand{\Q}{Q}
\newcommand{\tr}{\operatorname{tr}}
\def\lb{\mathopen{}\mathclose\bgroup\left}
\def\rb{\aftergroup\egroup\right}
\newcommand{\os}{\mathrm{OS}}
\newcommand{\bos}{\mathbf{OS}}
\newcommand{\bbos}{\mathbb{OS}}
\title{\huge Duality of causal distributionally robust optimization}
\author{Yifan Jiang \thanks{Email: {\tt yifan.jiang@imperial.ac.uk}.}}
\affil{Department of Mathematics, Imperial College London}
\date{}
\begin{document}
\maketitle

\begin{abstract}
	We study the distributionally robust optimization (DRO) in a dynamic context where the model uncertainty is captured by penalizing potential models in function of their adapted Wasserstein distance to a given reference model.
	We consider both discrete- and continuous-time settings and derive dynamic duality formulas that reformulate the worst-case expectation as a tractable minimax problem.
	The inner maximum can be computed recursively in discrete time, or solved by a path-dependent Hamilton--Jacobi--Bellman equation in continuous time.
	We further extend these duality results from the worst-case expectation to the worst-case expected shortfall, a non-linear expectation.
	Finally, we apply the DRO framework to optimal stopping problems in discrete time.
	We recast the original problem as a classical Wasserstein DRO on a nested space by introducing a novel relaxation that considers stopping times with respect to general flitrations.  

		\medskip

	\noindent{\em Keywords}: distributionally robust optimization, causal optimal transport, adapted Wasserstein distance, functional It\^o calculus.
	\medskip
\end{abstract}

\section{Introduction}

As Box famously said \citep{box76Science}, \emph{all models are wrong, but some are useful}.
In real-world applications, practitioners often need to trade off between a model's fidelity (its capability to capture system features) and a model's tractability (its capability to provide interpretable solutions).
A postulated model may capture some important aspects of reality, while inevitably ignoring some other aspects.
This is especially true in mathematical finance, where the model is often derived from theoretical considerations, possibly combined with some calibration to market data.
This inherent Knightian uncertainty \citep{knight21Risk} regarding the model is of fundamental importance and a subject of intense studies in mathematics and economics alike.
Mathematical frameworks such as risk measures \citep{follmer08Stochastic} and sublinear expectations \citep{peng19Nonlinear} have been developed to take into account such uncertainty.
More recently, Wasserstein distributionally robust optimization (W-DRO)  has emerged as a powerful tool to counter model uncertainty \citep{mohajerinesfahani18Datadriven,blanchet19Quantifying,gao23Distributionally}.
It is formulated as minimax problem  that optimizes a worst-case objective, evaluated over a collection of models.
This collection is often referred to as the ambiguity set and is taken as a Wasserstein ball centered at a reference model.

In this paper, we extend Wasserstein distributionally robust optimization (W-DRO) to a dynamic setting, where model uncertainty is quantified by the causal/adapted Wasserstein distance.
This distance not only captures the spatial differences between two models but also the information flow they generate.
The choice of ambiguity set here is natural in a dynamic setting and, to some extent, becomes essential when considering optimal stopping problems.
Crucially, it allows for models with potentially different support from the reference model, similar to the classical W-DRO, while excluding models that can only be obtained from an \emph{anticipative} perturbation of the reference model.

Despite its appealing theoretical  framework, causal distributionally robust optimization  often suffers from the computational challenges  inherited from  causal optimal transport.
Our contribution here is to fill this gap by providing a tractable dynamic duality formula for the causal DRO problem in both discrete- and continuous-time settings.
We focus on the distributional model risk:
\begin{equation*}
	\sup_{\nu\in B_{\delta}(\mu)}\E_{\nu}[f(X)],
\end{equation*}
where \(B_{\delta}(\mu)\) is an ambiguity set given by a causal Wasserstein ball or an adapted Wasserstein ball.
We will extend this hard ambiguity set constraint to a more general penalized form
\begin{equation}
	\label{eqn-4-dro}
	V_{\star}=\sup_{\nu\in\scrP(\cX)}\{\E_{\nu}[f(X)]-L(\cT_{\star }(\mu,\nu))\},
\end{equation}
where \(L\) is a penalty function, and \(\cT_{\star }\) represents the optimal causal/bi-causal transport cost given by
\begin{equation*}
	\cT_{\star}(\mu,\nu)= \inf_{\pi\in \Pi_{\star}(\mu,\nu)} \E_{\pi}[c(X,Y)],
\end{equation*}
with \(\star=\c/\bc\) respectively.
By choosing  an indicator penalization \(L\) and an appropriate cost \(c\), one can easily recover the corresponding ambiguity set \(B_{\delta}(\mu)\).

While \(V_{\c}\) and \(V_{\bc}\) are not equal a priori, we will show that under mild regularity conditions, these two penalizations are equivalent.
This allows us to omit the subscript when there is no ambiguity.
Our main results, detailed in Sections~\ref{sec-4-disc} and~\ref{sec-4-cont}, show that
\begin{equation*}
	V= \inf_{\lambda\geq 0}\{L^{*}(\lambda)+U(\lambda)\},
\end{equation*}
where \(L^{*}\) is the convex conjugate of \(L\), and \(U\) is a convex function given  by a dynamic programming principle.
In particular, \(U\) can be computed recursively in discrete time, or solved by a path-dependent Hamilton--Jacobi--Bellman (HJB) equation in continuous time.
Moreover, in Sections~\ref{sec-4-es} and~\ref{sec-4-os} we replace the linear expectation \(\nu\mapsto \E_{\nu}[f(X)]\) in \eqref{eqn-4-dro} with a nonlinear functional \(\nu\mapsto F(\nu)\) and derive duality formulas in discrete time.
In Section~\ref{sec-4-es}, we take \(F\) as the expected shortfall, a concave functional of the model;
in Section~\ref{sec-4-os}, \(F\) is taken as the value of an optimal stopping problem, which is in general not concave.

To the best of our knowledge, this is the first result that tackles the duality of causal DRO in continuous time and considers optimal stopping problems.
In continuous time, we reformulate the dual problem as a non-Markovian stochastic control problem and  utilize the tools from functional It\^o calculus \citep{cont10Change,bally16Stochastic} to establish a path-dependent HJB equation.
For the causal distributionally robust optimal stopping problems, we leverage a novel concave relaxation which lifts the optimal stopping problem to the space of adapted stochastic processes.
The original problem is then recast as a Wasserstein DRO problem on the `nested space', see Definition \ref{defn-4-nested} below.

\subsection{Related literature}
Distributionally robust optimization (DRO) provides a framework for decision-making when the underlying stochastic model is uncertain.
Unlike classical stochastic optimization, which assumes a single reference distribution, DRO considers a collection of plausible distributions often defined via statistical distances such as the Wasserstein distance or \(\phi\)-divergences and optimizes for the worst-case expectation over this ambiguity set.
This review is focused on the transport-type DRO; for a broader survey of the field, we refer interested readers to \citet{rahimian22Frameworks,kuhn25Distributionally}.

Wasserstein DRO has found broad applications in operations research, mathematical finance \citep{bartl20Computational,blanchet22Distributionally}, and machine learning \citep{bai23Wasserstein, bai25Wasserstein,blanchet19Quantifying}, where robustness to model misspecification is crucial.
A key development in Wasserstein DRO is its duality theory, which reveals a close connection to regularized optimization.
This theory has been progressively generalized, starting from the data-driven case where the reference measure is an empirical measure \citep{mohajerinesfahani18Datadriven},
extending to Borel measures on Euclidean spaces \citep{gao23Distributionally},
general Polish spaces \citep{blanchet19Quantifying},
and to spaces with the interchangeability property, such as Suslin spaces \citep{zhang24Short}.
More recently, variants of the classical Wasserstein DRO also have been introduced.
These include the robust optimized certainty equivalents in a  penalized form \citep{bartl20Computational}, a weak optimal transport-type DRO \citep{kupper23Risk}, and problems with marginal uncertainty in both source and target distributions \citep{fan23Quantifying}.

Few results are available for the causal DRO in a dynamic context, with all existing work focusing solely on a discrete-time setting.
Addressing the causality constraint is a key challenge.
In a recent work \citet{han25Distributionally}, the author reformulated the causality constraint as an infinite-dimensional linear constraint, which leads to  a dual optimization problem over an infinite-dimensional test function space.
On the other hand, our proposed dynamic duality formula provides a more tractable solution which reduces to several single step optimization problems.
It not only accommodates a more general penalized setting \eqref{eqn-4-dro}, but critically, it leverages the temporal structure of causal couplings.
We remark that our the discrete-time duality Theorem~\ref{thm-4-disc} was first appeared in \citet{jiang23Adapted}.
This result has also been independently obtained in \citet{gao22Datadriven,yang22Decisionmaking} under an indicator penalization and stronger regularity constraints.

\label{-nested}
Our continuous-time results rely on tools from functional It\^o calculus, which was first proposed by \citet{dupire09Functional}, and systematically studied in \citet{cont10Change,cont13Functional}.
It was then applied to study non-Markovian stochastic control problems and their associated path-dependent HJB equations.
A verification theorem  for the classical solution was established  in \citet[Chapter 8.3]{bally16Stochastic}.
The viscosity solution theory, however, has proven more intricate, leading to several proposed notions, for example in \citet{tang15Pathdependent,ekren14viscosity,ekren16Viscosity,ekren16Viscositya}, etc.

\subsection{Outline}
The rest of the paper is organized as follows.
In Section~\ref{sec-pre}, we introduce the basic notations and tools.
In Section~\ref{sec-4-dual}, as an intermediate step, we derive a general duality formula for causal DRO problems where the penalty is given as a function of causal optimal transport problem.
The dual problem involves a causal transport problem with a fixed reference distribution only.
A generalized Fenchel--Moreau duality theorem is proved in Lemma~\ref{lem-4-dual}.
In Section~\ref{sec-4-disc}, we focus on the discrete-time setting and derive a dynamic duality formula in Theorem~\ref{thm-4-disc}.
Under a mild continuity condition, the equivalence between the causal penalization and bi-causal penalization is established in Theorem~\ref{thm-4-reg}.
In Section~\ref{sec-4-cont}, we consider a continuous-time setting with a penalty given by the Cameron--Martin adapted Wasserstein distance.
We reformulate the dual problem as a stochastic control problem and identify the worst-case distribution via a path-dependent HJB equation in Theorem~\ref{thm-4-cont}.
In Sections~\ref{sec-4-es} and~\ref{sec-4-os}, we extend Theorem~\ref{thm-4-disc} in two directions.
We replace the linear expectation with the expected shortfall in Section~\ref{sec-4-es}, and study a numerical example of a two-step exotic option.
In Section~\ref{sec-4-os}, we consider  optimal stopping problems in the causal DRO framework.
A duality formula is derived in Theorem~\ref{thm-4-os} by lifting the original problem to the space of adapted processes.

\section{Preliminaries}
\label{sec-pre}
Let \(N\in \bbZ_{+}\) be the number of steps.
For any \(1\leq n\leq m\leq N\) and an \(N\)-tuple \((\theta_{1},\dots,\theta_{N})\), we denote the truncation \((\theta_{n},\dots,\theta_{m})\) by \(\theta_{n:m}\).

For a generic Polish space \(\cX\), we equip it with its Borel \(\sigma\)-algebra \(\cB(\cX)\).
Let \(\scrP(\cX)\) be the space of Borel probability measures on \(\cX\) equipped with its weak topology.
For any \(\mu\in\scrP(\cX)\) and an integrable function \(\varphi:\cX\to \bbR\), we use the shorthand notation
\begin{equation*}
	\mu(\varphi):=\E_{\mu}[\varphi]=\int \varphi\md \mu.
\end{equation*}
Given \(\mu,\nu\in\scrP(\cX)\),  the set of couplings between \(\mu\) and \(\nu\) is defined as
\begin{equation*}
	\Pi(\mu,\nu):=\{\pi\in\scrP(\cX\times\cX):\pi(\cdot\times\cX)=\mu(\cdot)\text{ and } \pi(\cX\times\cdot)=\nu(\cdot)\}.
\end{equation*}
For any \(\pi\in \scrP(\cX\times \cX)\), we write
\begin{equation*}
	\pi(\md x,\md y)=\pi(\md x) \pi_{x}(\md y),
\end{equation*}
where  \(\pi_{x}\) is the Borel regular disintegration kernel.

\subsection{Causal optimal transport}
Following \citet{acciaio20Causal,backhoff-veraguas17Causal,bartl24Wasserstein}, we give a brief introduction to the causal optimal transport.
We are interested in the probability measures on path spaces which are interpreted as laws of stochastic processes.
In discrete time, we take  time index set \(I\) as \(\{0,1,\dots,N\}\) and the canonical path space \(\cX=\cX_{0}\times\cX_{1}\times\cdots\cX_{N}\) the product of Polish spaces;
in continuous time, we consider \(I=[0,T]\) and  \(\cX=C_{0}([0,T];\bbR^{n})\) the continuous path space.
We equip the path space \(\cX\) with the uniform metric in both cases.
Let \(\Id_{\cX}:\cX\to\cX\) be the identity map on \(\cX\).
It is interpreted as the  canonical process and induces a natural filtration \(\bF=(\cF_{t})_{t\in I}\) given by \(\cF_{t}=\sigma(\Id_{\cX}(s):0\leq s\leq t)\) which denotes the information available at time \(t\).

To incorporate the information structure of the underlying space, the classical optimal transport is modified using the key concept of a \emph{causal} coupling.
Heuristically, this is a transport plan that is constrained to move mass without using any information from the future.

\begin{defn}[Causal coupling]
    \label{defn-2-causal-coupling}
    Let \(\mu,\nu\in\scrP(\cX)\).
    We say a coupling \(\pi\in\Pi(\mu,\nu)\) is \emph{causal} if \[x_{1}\mapsto \pi_{x_{1}}(V_{t}) \text{ is } \prescript{\mu}{}{}\cF_{t}\text{-measurable}\] for any \(V_{t}\in\cF_{t}\) and \(t\in I\), where \(\prescript{\mu}{}{}\cF_{t}\) is the completion of \(\cF_{t}\) under \(\mu\) and \(\pi_{x_{1}}\) is the disintegration kernel given by \(\pi(\md x_{1},\md x_{2})=\mu(\md x_{1}) \pi_{x_{1}}(\md x_{2})\).
    A causal coupling \(\pi\) is \emph{bi-causal} if further \([(x_{1},x_{2})\mapsto (x_{2},x_{1})]_{\#}\pi\) is causal.
    We write the set of causal (bi-causal) couplings between \(\mu\) and \(\nu\) as \(\Pi_{\c}(\mu,\nu)\) \((\Pi_{\bc}(\mu,\nu))\).
\end{defn}

Given a cost function \(c:\mathcal{X}\times \mathcal{X} \to \mathbb{R}\), the corresponding (bi-)causal optimal transport is given by 
\begin{equation*}
  \mathcal{T}_{\star}(\mu,\nu)=\inf_{\pi\in \Pi_{\star}(\mu,\nu)}\E_{\pi}[c(X,Y)],
\end{equation*}
where \(\star=\c/\bc\).
By \(\Pi(\mu,*)\) we denote the set of couplings with a fixed first marginal \(\mu\).
Accordingly,  \(\Pi_{\c}(\mu,*)\) and \(\Pi_{\bc}(\mu,*)\) represent the respective subsets of causal and bi-causal couplings.

We then introduce the causal coupling between two adapted stochastic processes equipped with general filtrations. 

\begin{defn}
    We say an adapted stochastic process \(\bbX\) is  given by a 5-tuple \((\Omega^{\bbX}, \cF^{\bbX}, \P^{\bbX}, \bF^{\bbX}, X)\) where \((\Omega^{\bbX}, \cF^{\bbX}, \bF^{\bbX},\P^{\bbX})\) is a filtered Polish probability space, and \(X:\Omega^{\bbX} \to \cX\) is a stochastic process adapted to the filtration \(\bF^{\bbX}=(\cF_{t}^{\bbX})_{t\in I}\).
\end{defn}
For an adapted process \(\bbX= (\Omega^{\bbX}, \cF^{\bbX}, \P^{\bbX}, \bF^{\bbX}, X)\), if \(\bF^{\bbX}\) coincides with \(\bF^{X}\) the natural filtration generated by \(X\), then we say \(\bbX\) is a \emph{naturally filtered} process.
With a slight abuse of notation, we may also use \(X\) to refer the 5-tuple of a naturally filtered process since itself determines the natural filtration.
We denote the space of  adapted processes  and naturally filtered processes by \(\AP\) and \(\NP\) respectively.

The notion of the causality can be naturally carried over to adapted stochastic processes as follows.

\begin{defn}[Causal coupling]
    \label{defn-2-causal-coupling2}
    Let \(\bbX_{i}=(\Omega^{\bbX_{i}},\cF^{\bbX_{i}},\bF^{\bbX_{i}},\P^{\bbX_{i}},X_{i})\) be two adapted stochastic processes for \(i=1,2\).
    We say a coupling \(\pi\in \Pi(\P^{\bbX_{1}},\P^{\bbX_{2}})\) is \emph{causal} if
    \begin{equation*}
        \omega_{1}\mapsto \pi_{\omega_{1}}(V_{t}) \text{ is } \prescript{\P^{\bbX_{1}}}{}{}\cF_{t}^{\bbX_{1}}\text{-measurable}    \end{equation*}
    for any \(V_{t}\in \cF_{t}^{\bbX_{2}}\) and \(t\in I\), where \(\pi_{\omega_{1}}\) is the disintegration kernel of \(\pi\) with respect to \(\Omega^{\bbX_{1}}\).
    A causal coupling \(\pi\) is \emph{bi-causal} if further \([(\omega_{1},\omega_{2})\mapsto (\omega_{2},\omega_{1})]_{\#}\pi\) is causal.
    With a slight abuse of notation, we write the set of causal (bi-causal) couplings between \(\bbX_{1}\) and \(\bbX_{2}\) as \(\Pi_{\c}(\bbX_{1},\bbX_{2})\) (\(\Pi_{\bc}(\bbX_{1},\bbX_{2})\)).
\end{defn}

\subsection{Analytic sets and universal measurability}
The analytic sets (also known as Suslin sets) are widely applied to reconcile the measurability issue in dynamic programming principle.
We give a minimal introduction here to serve our purpose.

\begin{defn}
	Let \(\cX\) be a Polish space.
	We say \(S\) a subset of \(\cX\) is  analytic  if \(S\) is a continuous image of a Polish space, and a function \(\varphi:\cX\to\bbR\) is upper semi-analytic if the level set \(\{\varphi\geq r\}\) is analytic for any \(r\in\bbR\).
	The universal \(\sigma\)-algebra of \(\cX\) is defined as
	\(
	\bigcap_{\mu\in\scrP(\cX)}  \prescript{\mu}{}\cB(X).
	\)
\end{defn}
To ease the notation, we will not distinguish any Borel measure \(\mu\) and its completion.
So, for any universally measurable function \(\varphi\), \(\mu(\varphi)\) is understood as the integration with respect to the completion of \(\mu\).
\begin{prop}[\citet{bertsekas96Stochastic}, Corollary 7.42.1]
	\label{prop-ana}
	Let \(S\subseteq \cX\) be an analytic set.
	Then \(S\) is universally measurable, and therefore any upper semi-analytic function is universally measurable.
\end{prop}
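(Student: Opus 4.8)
The plan is to prove the two assertions in turn; the first is the substantive one and the second is an immediate corollary. Fix a Borel probability measure $\mu\in\scrP(\cX)$ and an analytic set $S\subseteq\cX$; since $\mu$ is arbitrary it suffices to show $S\in\prescript{\mu}{}\cB(\cX)$. Let $\mu^{*}(E):=\inf\{\mu(B):E\subseteq B\in\cB(\cX)\}$ denote the outer measure and $\mu_{*}(E):=1-\mu^{*}(\cX\setminus E)$ the inner measure. Then $S\in\prescript{\mu}{}\cB(\cX)$ if and only if $\mu_{*}(S)=\mu^{*}(S)$, and by inner regularity of Borel probability measures on Polish spaces one has $\mu_{*}(S)=\sup\{\mu(K):K\subseteq S,\ K\ \text{compact}\}$. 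Hence the first assertion is equivalent to the capacitability identity
\[
\mu^{*}(S)=\sup\{\mu(K):K\subseteq S,\ K\ \text{compact}\}.
\]

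I would derive this from Choquet's capacitability theorem. One verifies that $I:=\mu^{*}$ is a Choquet $\cK$-capacity on the Polish space $\cX$, where $\cK$ denotes the compact subsets: it is monotone; it is continuous along increasing sequences of arbitrary subsets, because $\mu^{*}$ is a regular outer measure (each $E$ possesses a Borel hull $\widehat{E}\supseteq E$ with $\mu(\widehat{E})=\mu^{*}(E)$, obtained as a countable intersection of Borel supersets of nearly minimal measure); and it is continuous along decreasing sequences of compact sets, by continuity from above of the finite measure $\mu$. Choquet's theorem then asserts that every set produced from the closed subsets of $\cX$ by the Suslin operation — in particular every analytic set — is $I$-capacitable, which is precisely the displayed identity. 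Granting it, for each $\varepsilon>0$ there is a compact $K\subseteq S$ with $\mu(K)>\mu^{*}(S)-\varepsilon$, so $\mu_{*}(S)\geq\mu^{*}(S)$ and therefore $S\in\prescript{\mu}{}\cB(\cX)$.

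The only genuine obstacle here is the capacitability theorem itself. Its proof rests on a regular Suslin-scheme representation $S=\bigcup_{\alpha}\bigcap_{n}F_{\alpha|n}$ by closed sets indexed by finite integer sequences: given $\varepsilon>0$, one uses continuity of $\mu^{*}$ from below to prune the scheme, level by level, to a finitely branching subtree while forfeiting at most $\varepsilon 2^{-n}$ of outer measure at level $n$; after a standard refinement forcing the diameters of the $F_{s}$ to vanish along branches, the intersection of the (now compact) finite level-unions is a compact subset of $S$ whose outer measure is within $\varepsilon$ of $\mu^{*}(S)$. As this construction is classical I would merely cite it rather than reproduce the bookkeeping.

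The second assertion is then immediate. If $\varphi:\cX\to\bbR$ is upper semi-analytic, then $\{\varphi\geq r\}$ is analytic, hence universally measurable, for every $r\in\bbR$; consequently $\{\varphi>r\}=\bigcup_{n\geq 1}\{\varphi\geq r+1/n\}$ is a countable union of universally measurable sets, hence universally measurable, and since the rays $(r,+\infty)$, $r\in\bbR$, generate $\cB(\bbR)$, the map $\varphi$ is measurable from the universal $\sigma$-algebra of $\cX$ into $\cB(\bbR)$, i.e.\ universally measurable.
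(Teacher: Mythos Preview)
Your proof is correct and follows the standard route via Choquet's capacitability theorem. Note, however, that the paper does not actually prove this proposition: it is stated as a classical background fact in the preliminaries without proof (as with the projection result and the analytic selection theorem that follow, the latter being attributed to \citet{bertsekasStochasticOptimalControl1996}). So there is no ``paper's own proof'' to compare against; you have simply supplied the argument the paper omits, and your choice of the Choquet approach is exactly the standard one found in the references underlying this material.
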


\begin{prop}[\citet{bertsekas96Stochastic}, Proposition 7.39]
	\label{prop-proj}
	Let \(\cX\) and \(\cY\) be Polish spaces and \(D\subseteq \cX\times \cY\) an analytic set.
	Then the projection \(\proj_{\cX}(D):=\{x\in\cX:(x,y)\in D \text{ for some } y\}\) is an analytic subset of \(\cX\).
\end{prop}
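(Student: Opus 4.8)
The plan is to reduce the whole statement to the single fact that the class of analytic sets is stable under continuous images, which is immediate from the definition adopted above. First I would record that the product $\cX\times\cY$ of two Polish spaces is again Polish (it is complete metrizable and separable), so that speaking of an ``analytic subset of $\cX\times\cY$'' is meaningful in the sense of the definition above: there exist a Polish space $Z$ and a continuous surjection $g=(g_{1},g_{2}):Z\to D\subseteq\cX\times\cY$, where $g_{1}:Z\to\cX$ and $g_{2}:Z\to\cY$ are the (continuous) coordinate maps.

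Next I would observe that the coordinate projection $\proj_{\cX}:\cX\times\cY\to\cX$ is continuous, so the composition $\proj_{\cX}\circ g=g_{1}:Z\to\cX$ is continuous, and
\[
g_{1}(Z)\;=\;\proj_{\cX}\big(g(Z)\big)\;=\;\proj_{\cX}(D).
\]
Hence $\proj_{\cX}(D)$ is a continuous image of the Polish space $Z$, i.e.\ it is analytic. If the convention in force excludes the empty set from the analytic sets, the degenerate case $D=\emptyset$ is handled separately and trivially, since then $\proj_{\cX}(D)=\emptyset$.

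I do not expect a real obstacle here: the entire content is the remark that composing a continuous surjection onto $D$ with the continuous map $\proj_{\cX}$ yields a continuous surjection onto $\proj_{\cX}(D)$. The only points deserving a line of care are the Polishness of $\cX\times\cY$ (so that the hypothesis is well posed) and the empty-set convention. An equally short alternative would start from the representation of analytic sets as projections of closed subsets of $\cX\times\cY\times\bbN^{\bbN}$ and collapse two successive projections into one, but the direct ``continuous image of a Polish space'' argument is the most transparent.
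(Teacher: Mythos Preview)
Your argument is correct and is exactly the standard proof: since \(D\) is a continuous image of some Polish space \(Z\) and \(\proj_{\cX}\) is continuous, \(\proj_{\cX}(D)\) is again a continuous image of \(Z\), hence analytic. The paper itself does not give a proof of this proposition; it is stated as a background fact from descriptive set theory (in the spirit of the reference to Bertsekas--Shreve used elsewhere in that section), so there is nothing to compare against beyond noting that your proof is the canonical one.
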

We also recall the following proposition from \citet[Proposition 7.50]{bertsekas96Stochastic}.
\begin{prop}[Analytic selection theorem]
	\label{prop-sel}
	Let \(\cX\) and \(\cY\) be Polish spaces and \(D\subseteq \cX\times \cY\) an analytic set, and \(\varphi:D\to \bbR\) an upper semi-analytic function.
	Define \(\tilde{\varphi}:\proj_{\cX}(D)\to \bbR^{*}\) by
	\begin{equation*}
		\tilde{\varphi}(x)=\sup_{y\in D_{x}}\varphi(x,y),
	\end{equation*}
	where \(D_{x}=\{y\in \cY:(x,y)\in D\}\).
	Then for any  \(\varepsilon>0\), there exists an analytically measurable function \(s:\proj_{\cX}(D)\to \cX\) such that \((x,s(x))\in D\) for any \(x\in \proj_{\cX}(D)\), and
	\begin{equation*}
		\varphi(x,s(x))\geq\begin{cases}
			\tilde{\varphi}(x)-\varepsilon & \text{if}\quad \tilde{\varphi}<\infty,  \\
			1/\varepsilon \quad            & \text{if} \quad \tilde{\varphi}=\infty.
		\end{cases}
	\end{equation*}
\end{prop}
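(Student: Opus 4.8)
This statement is the classical Jankov--von Neumann selection theorem together with an $\varepsilon$-optimization step, so the plan is essentially to recount that argument (it is \citet[Proposition~7.50]{bertsekasStochasticOptimalControl1996}). The single substantive ingredient I would invoke is the \emph{uniformization theorem}: every analytic $A\subseteq\cX\times\cY$ admits an analytically measurable map $g\colon\proj_{\cX}(A)\to\cY$ with $\gr(g)\subseteq A$. Its proof, which I would treat as a black box, writes $A$ as a continuous image of the Baire space $\bbN^{\bbN}$, selects for each $x$ the leftmost preimage branch, and checks that the resulting selector is analytically measurable because each defining preimage set is built from $A$ by countably many projections and intersections. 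Everything else in the proof of Proposition~\ref{prop-sel} is an $\varepsilon$-net reduction to this statement.

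First I would record that the relevant level sets are analytic. Since $\varphi$ is upper semi-analytic, $\{\varphi>r\}=\bigcup_{k\geq1}\{\varphi\geq r+1/k\}$ is analytic, hence so is $A_{r}:=\{(x,y)\in D:\varphi(x,y)>r\}$, and by Proposition~\ref{prop-proj} its projection $G_{r}:=\proj_{\cX}(A_{r})=\{x\in\proj_{\cX}(D):\widehat{\varphi}(x)>r\}$ is analytic for every $r$. Setting $G_{\infty}:=\bigcap_{m\in\bbZ}G_{m\varepsilon}=\{\widehat{\varphi}=+\infty\}$ and noting that $D_{x}\neq\emptyset$ forces $\widehat{\varphi}>-\infty$ everywhere, one obtains the partition
\[
    \proj_{\cX}(D)=G_{\infty}\sqcup\bigsqcup_{m\in\bbZ}\bigl(G_{m\varepsilon}\setminus G_{(m+1)\varepsilon}\bigr).
\]
On $G_{\infty}$ I would apply the uniformization theorem to $A_{1/\varepsilon}\cap(G_{\infty}\times\cY)$ (whose projection is $G_{\infty}$, since $\widehat{\varphi}=+\infty>1/\varepsilon$ there) to get an analytically measurable $s_{\infty}$ with $(x,s_{\infty}(x))\in D$ and $\varphi(x,s_{\infty}(x))>1/\varepsilon$; and for each $m$ apply it to $A_{m\varepsilon}$ (whose projection is $G_{m\varepsilon}$) to get $s_{m}$ with $(x,s_{m}(x))\in D$ and $\varphi(x,s_{m}(x))>m\varepsilon$. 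Gluing $s:=s_{\infty}$ on $G_{\infty}$ and $s:=s_{m}$ on $G_{m\varepsilon}\setminus G_{(m+1)\varepsilon}$ gives $\gr(s)\subseteq D$, and $s$ is analytically measurable because for Borel $B$ the preimage $s^{-1}(B)$ is a countable union of sets of the form $s_{m}^{-1}(B)\cap(G_{m\varepsilon}\setminus G_{(m+1)\varepsilon})$, each analytically measurable since the analytically measurable sets form a $\sigma$-algebra containing all analytic sets (hence their complements). The required inequality is then immediate: on $G_{m\varepsilon}\setminus G_{(m+1)\varepsilon}$ one has $m\varepsilon<\widehat{\varphi}(x)\leq(m+1)\varepsilon$, so $\varphi(x,s(x))>m\varepsilon\geq\widehat{\varphi}(x)-\varepsilon$, while on $G_{\infty}$ one has $\varphi(x,s(x))>1/\varepsilon$.

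The main obstacle is packaged entirely inside the uniformization theorem, which rests on descriptive-set-theoretic machinery (the tree/Baire-space representation of analytic sets and leftmost-branch selection) rather than on anything specific to this paper. The one delicate bookkeeping point in the $\varepsilon$-reduction is that $\widehat{\varphi}$ is only upper semi-analytic, so its sub-level sets $\{\widehat{\varphi}<r\}$ are merely coanalytic and uniformization cannot be applied to them directly; the fix is to apply uniformization to the \emph{full} analytic sets $G_{m\varepsilon}$ and only afterwards restrict the selectors to the slices $G_{m\varepsilon}\setminus G_{(m+1)\varepsilon}$, which are analytically measurable even though not analytic.
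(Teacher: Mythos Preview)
The paper does not prove this proposition at all; it simply recalls it from \citet[Proposition~7.50]{bertsekasStochasticOptimalControl1996} without argument. Your proposal correctly identifies the source and gives a faithful sketch of the standard proof there---uniformization applied on an $\varepsilon$-grid of the analytic super-level sets of $\widehat{\varphi}$, followed by gluing---so there is nothing to compare and nothing to correct.
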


The following result is adapted from \citet{zhang24Short}.
\begin{lem}
	\label{lem-static}
	Let \((\cX,\cB)\) be a Polish space equipped with its Borel \(\sigma\)-algebra, \(\varphi:\cX\times\cX\to \bbR\) upper semi-analytic.
	Define  \(\tilde{\varphi}:\cX \to \bbR^{*}\) as
	\begin{equation*}
		\tilde{\varphi}(x)=\sup_{y\in\cX}\varphi(x,y).
	\end{equation*}
	Then \(\tilde{\varphi}\) is universally measurable; moreover, it holds that
	\begin{equation*}
		\E_{\mu}[\sup_{y\in \cX}\varphi(X,y)]=\sup_{\pi\in \Pi(\mu,*)}\E_{\pi}[\varphi(X,Y)].
	\end{equation*}
\end{lem}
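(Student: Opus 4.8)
The plan is to settle the universal measurability of $\widehat\varphi$ first, and then prove the two inequalities between $\E_{\mu}[\widehat\varphi(X)]$ and $\sup_{\pi\in\Pi(\mu,*)}\E_{\pi}[\varphi(X,Y)]$ separately. For the measurability, I would observe that for every $r\in\bbR$,
\begin{equation*}
    \{\widehat\varphi\geq r\}=\bigcap_{k\geq1}\proj_{\cX}\big(\{(x,y)\in\cX\times\cX:\varphi(x,y)>r-1/k\}\big),
\end{equation*}
since $\sup_{y}\varphi(x,y)\geq r$ holds exactly when for every $k$ there is some $y$ with $\varphi(x,y)>r-1/k$. Each set $\{\varphi>r-1/k\}=\bigcup_{j\geq1}\{\varphi\geq r-1/k+1/j\}$ is a countable union of analytic sets, hence analytic, so its $\cX$-projection is analytic by Proposition \ref{prop-proj}, and a countable intersection of analytic sets is again analytic; thus $\widehat\varphi$ is upper semi-analytic and, by Proposition \ref{prop-ana}, universally measurable. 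Since $\varphi$ and hence $\widehat\varphi$ are bounded below, $\E_{\mu}[\widehat\varphi(X)]\in(-\infty,+\infty]$ is well defined.

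For the inequality ``$\geq$'', note that $\varphi(x,y)\leq\widehat\varphi(x)$ for every $(x,y)\in\cX\times\cX$, so for any $\pi\in\Pi(\mu,*)$, using that the first marginal of $\pi$ is $\mu$, we get $\E_{\pi}[\varphi(X,Y)]\leq\E_{\pi}[\widehat\varphi(X)]=\E_{\mu}[\widehat\varphi(X)]$; taking the supremum over $\pi$ settles this direction (both integrands are universally measurable and bounded below, so the integrals are meaningful).

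For the reverse inequality I would invoke the analytic selection theorem. Applying Proposition \ref{prop-sel} to the Polish, hence analytic, set $D=\cX\times\cX$, for which $\proj_{\cX}(D)=\cX$ and every fiber $D_{x}$ equals $\cX$, yields for each $n\geq1$ an analytically measurable $s_{n}:\cX\to\cX$ with $\gr(s_{n})\subseteq D$ and
\begin{equation*}
    \varphi(x,s_{n}(x))\geq\psi_{n}(x):=\big(\widehat\varphi(x)-\tfrac1n\big)\1_{\{\widehat\varphi<\infty\}}(x)+n\,\1_{\{\widehat\varphi=\infty\}}(x).
\end{equation*}
Since $s_{n}$ is universally measurable, so is $x\mapsto(x,s_{n}(x))$, so $\pi_{n}:=(\Id,s_{n})_{\#}\mu$ is a well-defined Borel probability measure on $\cX\times\cX$ with first marginal $\mu$, i.e.\ $\pi_{n}\in\Pi(\mu,*)$; moreover $x\mapsto\varphi(x,s_{n}(x))$ is universally measurable, and by change of variables
\begin{equation*}
    \E_{\pi_{n}}[\varphi(X,Y)]=\int\varphi(x,s_{n}(x))\,\mu(\md x)\geq\int\psi_{n}(x)\,\mu(\md x).
\end{equation*}
As $n\to\infty$ we have $\psi_{n}\uparrow\widehat\varphi$ pointwise, and the $\psi_{n}$ are bounded below by a finite constant independent of $n$, so monotone convergence gives $\int\psi_{n}\,\md\mu\to\E_{\mu}[\widehat\varphi(X)]$; therefore $\sup_{\pi}\E_{\pi}[\varphi(X,Y)]\geq\E_{\mu}[\widehat\varphi(X)]$, which together with the first inequality proves the claim.

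The only genuinely delicate point is this last step. The selector provided by Proposition \ref{prop-sel} is merely analytically (universally) measurable rather than Borel, so one has to verify that it still induces a legitimate element of $\Pi(\mu,*)$ — using that the pushforward of a Borel probability measure under a universally measurable map is Borel, with the correct first marginal — and that every composed integrand remains universally measurable, so that the displayed identities are unambiguous. This measurable-selection bookkeeping, together with the treatment of the $\{\widehat\varphi=\infty\}$ branch, is the crux of the argument, while the rest reduces to pointwise domination and monotone convergence.
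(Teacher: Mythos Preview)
Your argument is correct and follows essentially the same route as the paper: projection to show $\widehat\varphi$ is upper semi-analytic, the trivial pointwise inequality for ``$\geq$'', and the analytic selection theorem (Proposition~\ref{prop-sel}) for ``$\leq$''. The only cosmetic difference is that the paper, rather than pushing forward directly by the analytically measurable selector $s_n$, first replaces it by a Borel map $t_n$ with $t_n=s_n$ $\mu$-a.e.\ and then sets $\pi_n=(\Id,t_n)_{\#}\mu$; this sidesteps the bookkeeping you flag at the end about universally measurable pushforwards, but your direct route is equally valid.
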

\begin{proof}
	Notice that for any \(r\in \bbR\) we have
	\begin{equation*}
		\bigl\{x:\sup_{y\in\cX}\varphi(x,y)>r\bigr\}=\proj_{\cX}(\{(x,y):\varphi(x,y)>r\}).
	\end{equation*}
	Since \(\varphi\) is upper semi-analytic, we have \(\tilde{\varphi}\) is also upper semi-analytic by Proposition~\ref{prop-proj}.
	Moreover, by Proposition~\ref{prop-ana}, we know \(\tilde{\varphi}\) is universally measurable.
	It follows from the definition that
	\begin{equation*}
		\E_{\mu}[\sup_{y\in\cX}\varphi(X,y)]\geq \sup_{\pi\in\Pi(\mu,*)}\E_{\pi}[\varphi(X,Y)].
	\end{equation*}

	Now, by Proposition~\ref{prop-sel} there exists an analytically measurable function \(s_{n}:\cX\to \cX\) such that
	\begin{equation*}
		\varphi(x,s_{n}(x))\geq \begin{cases}
			\tilde{\varphi}(x)-\frac{1}{n}\quad & \text{if} \quad \tilde{\varphi}(x)<\infty, \\
			n\quad                              & \text{if} \quad \tilde{\varphi}(x)=\infty.
		\end{cases}
	\end{equation*}
	Moreover, there exists a Borel measurable function \(t_{n}\) such that \(t_{n}=s_{n}\) holds \(\mu\)-a.s.
	Then we take
	\(
	\pi_{n}=(\Id,t_{n})_{\#} \mu\in \Pi(\mu,*).
	\)
	We derive
	\begin{align*}
		 & \quad \E_{\pi_{n}}[\varphi(X,Y)]=\E_{\mu}[\varphi(X,t_{n}(X))] \geq \E_{\mu}[\tilde{\varphi}\1_{\{\tilde{\varphi}<\infty\}}]+n\E_{\mu}[\1_{\{\tilde{\varphi}=\infty\}}]-\frac{1}{n}.
	\end{align*}
	As \(n\to\infty\), we conclude
	\begin{equation*}
		\sup_{\pi\in\Pi(\mu,*)}\E_{\pi}[\varphi(X,Y)]\geq \E_{\mu}[\sup_{y\in\cX}\varphi(X,y)].
	\end{equation*}
\end{proof}

\subsection{Convex analysis}
We recall several basic concepts and results in convex analysis.
\begin{defn}
	Let \(\varphi:\bbR \to \bbR^{*}\) be a convex function.
	We define the \emph{domain} of \(\varphi\) as
	\begin{equation*}
		\dom(\varphi)=\{x\in\bbR:\varphi(x)<\infty\}.
	\end{equation*}
	We say \(\varphi\) is \emph{proper} if \(\dom(\varphi)\neq \emptyset\), and \(\varphi\) is \emph{closed} if \(\dom(\varphi)\) is closed.
	A function \(\psi\) is proper closed concave if and only if \(-\psi\) is proper closed convex.
\end{defn}

\begin{defn}
	Let \(\varphi:\bbR\to\bbR^{*}\) be a convex function.
	The convex conjugate of \(\varphi:\bbR\to \bbR^{*}\) is defined as
	\(
	\varphi^{*}(y)=\sup_{x\in\bbR}\{xy-\varphi(x)\}.
	\)
	Similarly, we define the concave conjugate of \(\psi\) as
	\(
	\psi_{*}(y)=\inf_{x\in\bbR}\{xy-\psi(x)\}.
	\)
\end{defn}

The following result is the celebrated convex duality theorem from \citet[Corollary 12.2.1]{rockafellar97Convex}.
\begin{thm}[Fenchel--Moreau Theorem]
	\label{thm-4-fenchel}
	Let \(\varphi:\bbR\to \bbR^{*}\) be a closed proper convex (concave) function.
	Then, we have  \(\varphi^{**}=\varphi\) (\(\varphi_{**}=\varphi\)).
\end{thm}

We introduce the subdifferential of a convex function as an extension of the classical derivative to the convex functions which are not necessarily differentiable.
\begin{defn}[Subdifferential]
	\label{defn-4-subdiff}
	Let \(\varphi:\bbR\to \bbR^{*}\) be a convex function.
	For any \(x\in\bbR\), we define the \emph{subdifferential} of \(\varphi\) at \(x\) as
	\begin{equation*}
		\partial \varphi(x)=\{y\in\bbR:\varphi(x')\geq \varphi(x)+y(x'-x)\quad \forall x'\in\bbR\}.
	\end{equation*}
\end{defn}

\begin{prop}[\citet{rockafellar97Convex}, Theorem 23.5]
	\label{prop-4-subdiff}
	Let \(\varphi:\bbR\to \bbR^{*}\) be a convex function.
	If \(\varphi\) is proper and closed, then \(y\in \partial \varphi(x)\) if and only if \(x\in\partial \varphi^{*}(y)\).
	And we have the equality \(\varphi(x)+\varphi^{*}(y)=xy\).
\end{prop}

\subsection{Horizontal and vertical derivatives}
Following \citet{dupire09Functional,cont10Change,bally16Stochastic}, we  introduce the horizontal and vertical derivatives of a non-anticipative functional.
We start with the definition of a non-anticipative functional on the c\`adl\`ag path space \(D([0,T];\bbR^{n})\).
\begin{defn}
	We say a functional \(F:[0,T]\times D([0,T];\bbR^{n})\to\bbR\) is non-anticipative if
	\(F(t,x)=F(t,X(\cdot\wedge t))\) for any \(t\in[0,T]\) and \(x\in D([0,T];\bbR^{n})\).
\end{defn}

\begin{defn}
	A non-anticipative functional \(F\) is said to be \emph{horizontally} differentiable at \((t,x)\in [0,T]\times D([0,T];\bbR^{n})\) if the limit
	\begin{equation*}
		\cD F(t,x)=\lim_{h\to 0^{+}}\frac{F(t+h,x(\cdot\wedge t))-F(t,x(\cdot\wedge t))}{h} \text{ exists.}
	\end{equation*}
	We call \(\cD F(t,x)\) the \emph{horizontal} derivative of \(F\) at \((t,x)\).
\end{defn}

\begin{defn}
	A non-anticipative functional \(F\) is said to be \emph{vertically} differentiable at \((t,x)\in [0,T]\times D([0,T];\bbR^{n})\) if the map
	\begin{equation*}
		\bbR^{n}\ni e\mapsto F(t,x(\cdot\wedge t)+e \1_{[t,T]}) \in \bbR
	\end{equation*}
	is differentiable at \(0\).
	We call its gradient at \(0\) the \emph{vertical} derivative of \(F\) at \((t,x)\) and denote it by \(\nabla_{x}F(t,x)\).
\end{defn}

Before we proceed to the non-anticipative functional on the  continuous path space, we introduce some regularity conditions.
We equip the product space \([0,T]\times D([0,T];\bbR^{n})\) with the metric \[d_{\infty}((t,x),(t',x')):= |t-t'|+\sup_{s\in [0,T]}\|x(\cdot \wedge t)-x'(\cdot\wedge t')\|_{\infty}.\]

\begin{defn}[Left continuity]
	We say a non-anticipative functional \(F\) is left continuous if for any \((t,x)\in [0,T]\times D([0,T];\bbR^{n})\) and any sequence \((t_{n},x_{n})\stackrel{d_{\infty}}{\to}(t,x)\) in \([0,T]\times D([0,T];\bbR^{n})\) with \(t_{n}\) increasing, we have
	\begin{equation*}
		F(t_{n},x_{n})\to F(t,x).
	\end{equation*}
	By \(C_{l}\) we denote the space of left continuous non-anticipative functionals which are also continuous at any fixed time \(t\in [0,T]\).
\end{defn}

\begin{defn}[Boundedness preserving]
	We say a non-anticipative functional \(F\) is boundedness preserving if for any  \(M>0\) and \(t_{0}<T\) there exists \(C_{M,t_{0}}\) such that
	\begin{equation*}
		|F(t,x)|\leq C_{M,t_{0}} \quad \text{ if } \quad \|x(\cdot\wedge t_{0})\|_{\infty}\leq M.
	\end{equation*}
	By \(C_{b}\) we denote the space of  bounded preserving non-anticipative functionals.
\end{defn}

The following class of regular non-anticipative functionals plays a special role in functional It\^o calculus.
\begin{defn}
	We say a non-anticipative functional \(F\) is in the class \(C^{1,2}_{b}\) if the following conditions hold:
	\begin{enumerate}[label=(\roman*)]
		\item \(F\) admits horizontal and the first and second vertical derivatives for any \((t,x)\in [0,T]\times D([0,T];\bbR^{n})\).
		\item \(F,\cD F, \nabla_{x}F, \nabla_{x}^{2}F\in C_{l}\).
		\item \(\cD F, \nabla_{x}F, \nabla^{2}_{x}F \in C_{b}\).
	\end{enumerate}
\end{defn}

\begin{thm}[\citet{bally16Stochastic}, Theorem 5.27, Theorem 5.28]
	Let \(F_{1},F_{2}\) be two non-anticipative functionals in \(C^{1,2}_{b}\).
	If \(F_{1}\) and \(F_{2}\) are equal on all continuous paths,
	then their first and second horizontal derivatives coincide on all continuous paths.
\end{thm}

We say a non-anticipative functional \(F:[0,T]\times C([0,T];\bbR^{n})\) is of class \(C^{1,2}_{b}\) if there exists a non-anticipative functional \(G:[0,T]\times D([0,T];\bbR^{n})\to \bbR\) such that \(F=G\) on \([0,T]\times C([0,T];\bbR^{n})\) and \(G\in C^{1,2}_{b}\).
The vertical derivative of \(F\) is defined as the one of \(G\).
The above theorem ensures such definition is independent of the choice of \(G\).

\section{A convex duality}
\label{sec-4-dual}
In this section, we present a convex duality in Proposition~\ref{prop-4-dual} which works in both discrete and continuous-time settings.
It decouples the loss function \(L\) and acts as an intermediate step to derive the duality for causal DRO problems.

\begin{asmp}
	\label{asmp-4-dual}
	We assume the following conditions:
	\begin{enumerate}[label=(\roman*)]
		\item   \(L:\bbR^{*}\to\bbR^{*}\) is a non-decreasing closed proper convex function with \(L(0)=0\) and \(L(+\infty)=+\infty\).
		\item \(c:\cX\times \cX\to\bbR^{*}\) is  lower semi-analytic and non-negative with \(c(x,y)=0\) if and only if \(x=y\) for any \(x,y\in\cX\).
		\item   \(f:\cX\to \bbR\) is upper semi-analytic and \(\mu\)-integrable.
	\end{enumerate}
\end{asmp}

\begin{prop}
	\label{prop-4-dual}
	Under Assumption~\ref{asmp-4-dual}, we have the duality
	\begin{equation*}
		V_{\c}=\sup_{\nu\in \scrP(\cX)}\{\E_{\nu}[f(X)]-L(\cT_{\c}(\mu,\nu))\}=\inf_{\lambda\geq 0}\{L^{*}(\lambda)+U(\lambda)\},
	\end{equation*}
	where \(L^{*}\) is the convex conjugate of \(L\), and \(U(\lambda)=\sup_{\pi\in\Pi_{\c}(\mu,*)}\E_{\pi}[f(Y)-\lambda c(X,Y)]\).
	Moreover, there exists a dual optimizer \(\lambda_{*}\) attaining the infimum.
	If there exist \(\nu_{*}\in\cX\) and \(\pi_{*}\in\Pi_{\c}(\mu,\nu_{*})\) such that
	\begin{align}
		\label{eqn-4-slack}
		L^{*}(\lambda_{*})=\lambda_{*}\E_{\pi_{*}}[c(X,Y)]-L(\E_{\pi_{*}}[c(X,Y)]) \text{ and } U(\lambda_{*})=\E_{\pi_{*}}[f(Y)-\lambda_{*}c(X,Y)],
	\end{align}
	then it holds
	\begin{equation*}
		V_{\c}=\E_{\nu_{*}}[f(X)]-L(\cT_{\c}(\mu,\nu_{*}))=L^{*}(\lambda_{*})+U(\lambda_{*}).
	\end{equation*}
\end{prop}

\begin{rmk}
	\label{rmk-4-ext}
	Let \(\cP\ni \mu\) be a convex subset of \(\scrP(\cX)\) and \(\Pi_{\c}(\mu,\cP)=\bigcup_{\nu\in\cP} \Pi_{\c}(\mu,\nu)\).
	One can extend the above duality to a closed proper concave functional \(F:\cP\to \bbR\).
	Under Assumption~\ref{asmp-4-dual} (i) and (ii), it holds
	\begin{equation*}
		\sup_{\nu\in\cP}\{F(\nu)-L(\cT_{\c}(\mu,\nu))\}=\inf_{\lambda\geq 0}\{L^{*}(\lambda)+\sup_{\nu\in \cP}\{F(\nu)-\cT_{\c}(\mu,\nu)\}\}.
	\end{equation*}
\end{rmk}

\begin{proof}
	\emph{Step 1.} We first show the case \(L=+\infty\1_{(\delta,+\infty]}\) for some \(\delta\geq 0\).
	We write
	\begin{equation*}
		V(\delta)=\sup_{\cT_{\c}(\mu,\nu)\leq \delta} \E_{\nu}[f(X)].
	\end{equation*}
	We claim that \(V(\delta)\) is  proper and concave by naturally setting \(V\) as \(-\infty\) on the negative real line.
	The concavity of \(V\) follows directly from the convexity of the set of causal couplings \(\Pi_{\c}(\mu,*)\).
	By Assumption~\ref{asmp-4-dual} (ii) and (iii), we obtain \(V\) is proper as \(V(0)=\E_{\mu}[f(X)]<\infty\).
	If \(V(\delta)=+\infty\) for some \(\delta>0\), then \(V(\delta)=+\infty\) for any \(\delta>0\).
	In this case it is direct to verify the desired duality
	\begin{equation*}
		V(\delta)=  \inf_{\lambda\geq 0}\{\lambda\delta +\sup_{\pi\in\Pi_{\c}(\mu,*)}\{\E_{\pi}[f(Y)-\lambda c(X,Y)]\}\}=+\infty.
	\end{equation*}
	Now, we focus on the case \(V(\delta)<+\infty\) for any \(\delta\geq 0\).
	In particular, \(V\) is closed, proper, and concave.
	We calculate the concave conjugate of \(V\) as
	\begin{align*}
		V_{*}(\lambda) =\inf_{\delta\geq 0}\{\lambda\delta-\sup_{\cT_{\c}(\mu,\nu)\leq \delta}\E_{\nu}[f(X)]\} & =\inf_{\delta\geq 0}\inf_{\cT_{\c}(\mu,\nu)\leq \delta }\{\lambda\delta-\E_{\nu}[f(X)]\}      \\
		                                                                                                       & =\inf_{\nu\in \scrP(\cX)}\inf_{\delta\geq \cT_{\c}(\mu,\nu)}\{\lambda \delta-\E_{\nu}[f(X)]\} \\
		                                                                                                       & =\inf_{\nu\in \scrP(\cX)}\{\lambda  \cT_{\c}(\mu,\nu)-\E_{\nu}[f(X)]\}                        \\
		                                                                                                       & =\inf_{\pi\in\Pi_{\c}(\mu,*)}\E_{\pi}[\lambda c(X,Y)-f(Y)].
	\end{align*}
	By Fenchel--Moreau Theorem~\ref{thm-4-fenchel}, we have \(V=(V_{*})_{*}\) which yields
	\begin{align*}
		V(\delta) & =\inf_{\lambda\geq 0}\{\lambda\delta -V_{*}(\lambda)\}                                                 \\
		          & =\inf_{\lambda\geq 0}\{\lambda\delta -\inf_{\pi\in\Pi_{\c}(\mu,*)}\{\E_{\pi}[\lambda c(X,Y)-f(Y)]\}\}  \\
		          & =\inf_{\lambda\geq 0}\{\lambda\delta +\sup_{\pi\in\Pi_{\c}(\mu,*)}\{\E_{\pi}[f(Y)-\lambda c(X,Y)]\}\}.
	\end{align*}
	The desired duality is shown by noticing \(L^{*}(\lambda)=\delta\lambda\) when \(L=+\infty\1_{(\delta,+\infty]}\).

	\emph{Step 2.}
	We consider a general penalization \(L\) satisfying Assumption~\ref{asmp-4-dual} (i).
	We notice that
	\begin{align*}
		\sup_{\nu\in\scrP(\cX)}\{\E_{\nu}[f(X)]-L(\cT_{\c}(\mu,\nu))\} & \leq \sup_{\nu\in\scrP(\cX)}\{V(\cT_{\c}(\mu,\nu))-L(\cT_{\c}(\mu,\nu))\} \\
		                                                               & \leq\sup_{\delta\geq 0}\{V(\delta)-L(\delta)\}.
	\end{align*}
	On the other hand, since \(L\) is non-decreasing, we have for any \(\delta\geq 0\)
	\begin{align*}
		V(\delta)-L(\delta)=\sup_{\cT_{\c}(\mu,\nu)\leq \delta} \{\E_{\nu}[f(X)]-L(\delta)\}\leq \sup_{\cT_{\c}(\mu,\nu)\leq \delta} \{\E_{\nu}[f(X)]-L(\cT_{\c}(\mu,\nu))\}.
	\end{align*}
	Hence, we derive \(\sup_{\nu\in\scrP(\cX)}\{\E_{\nu}[f(X)]-L(\cT_{\c}(\mu,\nu))\}=\sup_{\delta\geq 0}\{V(\delta)-L(\delta)\}\).
	Plugging the result from \emph{Step 1}, we obtain
	\begin{align*}
		\sup_{\nu\in\scrP(\cX)}\{\E_{\nu}[f(X)]-L(\cT_{\c}(\mu,\nu))\} & =\sup_{\delta\geq 0}\{V(\delta)-L(\delta)\}                                                                                        \\
		                                                               & =\sup_{\delta\geq 0}\inf_{\lambda\geq 0}\{\lambda \delta -L(\delta) + \sup_{\pi\in \Pi_{\c}(\mu,*)}\E_{\pi}[f(Y)-\lambda c(X,Y)]\} \\
		                                                               & :=\sup_{\delta\geq 0}\inf_{\lambda\geq 0}\{\lambda\delta-L(\delta)+ U(\lambda)\}.
	\end{align*}
	Here, \(U(\lambda)=\sup_{\pi\in \Pi_{\c}(\mu,*)}\E_{\pi}[f(Y)-\lambda c(X,Y)]\) either is equal to \(+\infty\) for any \(\lambda\geq 0\) or is a closed proper convex function of \(\lambda\).
	In the former case, it is direct to verify
	\begin{equation*}
		\sup_{\nu\in\scrP(\cX)}\{\E_{\nu}[f(X)]-L(\cT_{\c}(\mu,\nu))\}=\inf_{\lambda\geq 0}\{L^{*}(\lambda  )+U(\lambda)\}=+\infty.
	\end{equation*}
	In the latter case, the duality will follow from Lemma~\ref{lem-4-dual} below.

	\emph{Step 3.} We notice by definition \((L^{*}+U)\) is lower semi-continuous and goes to infinity at infinity.
	Therefore, there exists a minimizer \(\lambda_{*}\).
	Given a pair \((\nu_{*},\pi_{*})\) satisfying the slackness condition \eqref{eqn-4-slack}, we derive
	\begin{align*}
		\E_{\nu_{*}}[f(X)]-L(\cT_{c}(\mu,\nu_{*})) & \geq \lambda_{*}\E_{\pi_{*}}[c(X,Y)]-L(\E_{\pi_{*}}[c(X,Y)])+\E_{\pi_{*}}[f(Y)-\lambda_{*}c(X,Y)] \\
		                                           & =L^{*}(\lambda_{*})+U(\lambda)=V_{\c}.
	\end{align*}
	The reverse direction is trivial and we complete the proof.
\end{proof}

We did not find a direct reference for the following lemma, so we include a proof for the completeness.
\begin{lem}
	\label{lem-4-dual}
	Let \(\varphi,\psi:\bbR\to \bbR^{*}\) be two closed proper convex functions.
	Then, we have the following  minimax theorem
	\begin{align*}
		\sup_{x\in\bbR}\inf_{y\in\bbR}\{xy-\varphi(x)+\psi(y)\}=\inf_{y\in\bbR}\sup_{x\in\bbR}\{xy-\varphi(x)+\psi(y)\}.
	\end{align*}
\end{lem}
\begin{rmk}
	This lemma generalizes Fenchel--Moreau theorem.
	By taking \(\psi\) as a linear function, we retrieve the classical Fenchel--Moreau theorem \(\varphi^{**}=\varphi\).
\end{rmk}

\begin{proof}
	By definition, we have
	\begin{align*}
		\sup_{x\in\bbR}\inf_{y\in\bbR}\{xy-\varphi(x)+\psi(y)\}\leq\inf_{y\in\bbR}\sup_{x\in\bbR}\{xy-\varphi(x)+\psi(y)\}.
	\end{align*}
	For the other direction, we know
	\begin{align*}
		\inf_{y\in\bbR}\sup_{x\in\bbR}\{xy-\varphi(x)+\psi(y)\} & =\inf_{y\in\bbR}\{\varphi^{*}(y)+\psi(y)\}.
	\end{align*}
	Without loss of generality, we take \(y_{0}\) satisfying \(0\in \partial \varphi^{*}(y_{0})+\partial \psi(y_{0})\) and \(x_{0}\in \partial \varphi^{*}(y_{0})\cap -\partial \psi(y_{0})\).
	Otherwise, we have \(\varphi^{*}\) and \(\psi\) are monotone and bounded from below, and thus
	\begin{align*}
		\inf_{y\in\bbR}\{\varphi^{*}(y)+\psi(y)\} & =\inf_{y\in\bbR}\varphi^{*}(y)+\inf_{y\in\bbR}\psi(y)                                            \\
		                                          & =-\varphi(0)+\inf_{y\in\bbR}\psi(y)\leq \sup_{x\in\bbR}\inf_{y\in\bbR}\{xy-\varphi(x)+\psi(y)\}.
	\end{align*}
	By Proposition~\ref{prop-4-subdiff}, we have
	\begin{equation*}
		\varphi(x_{0})+\varphi^{*}(y_{0})=x_{0}y_{0},
	\end{equation*}
	and
	\begin{equation*}
		\psi(y)-\psi(y_{0})+x_{0}(y-y_{0})\geq 0.
	\end{equation*}
	Therefore, we derive
	\begin{align*}
		\sup_{x\in\bbR}\inf_{y\in\bbR}\{xy-\varphi(x_{0})+\psi(y)\} & \geq \inf_{y\in\bbR}\{x_{0}y-\varphi(x)+\psi(y)\}                                                    \\
		                                                            & =\inf_{y\in\bbR}\{ x_{0}y_{0}-\varphi(x_{0})  + \psi(y_{0}) + [\psi(y)-\psi(y_{0})+x_{0}(y-y_{0})]\} \\
		                                                            & \geq \varphi^{*}(y_0)+\psi(y_{0})\geq \inf_{y\in\bbR}\sup_{x\in\bbR}\{xy-\varphi(x)+\psi(y)\}.
	\end{align*}
\end{proof}

\section{Discrete-time results}
\label{sec-4-disc}

In this section, we focus on the discrete-time setting.
We take \(I=\{0,1,\dots,N\}\) and \(\cX=\cX_{0}\times \cX_{1}\times \cdots\times \cX_{N}\).
In light of Proposition~\ref{prop-4-dual}, it suffices to compute \(\sup_{\pi\in\Pi_{\c}(\mu,*)}\E_{\pi}[f(Y)-\lambda c(X,Y)]\), which can be viewed as causal optimal transport problem with constraint of the source measure only.
We will exploit a key dynamic temporal structure of causal couplings from the following proposition.
\begin{prop}[\citet{backhoff-veraguas17Causal}]
	\label{prop-4-couple}
	The following statements are equivalent:
	\begin{itemize}
		\item \(\pi\in\Pi(\mu,*)\) is a causal coupling with the first marginal \(\mu\).
		\item Decomposing \(\pi\) in terms of successive regular kernels
		      \begin{equation*}
			      \pi(\md x,\md y)=\pi_{0}(\md x_{0},\md y_{0})\pi_{x_{0},y_{0}}(\md x_{1},\md y_{1})\cdots \pi_{x_{0:N-1},y_{0:N-1}}( \md x_{N},\md y_{N}),
		      \end{equation*}
		      for any \(1\leq n\leq N \) and  \(\pi\)-almost surely \(x_{0:n-1}\), \(y_{0:n-1}\) we have
		      \begin{equation*}
			      \pi_{x_{0:n-1},y_{0:n-1}}(\md x_{n},\md y_{n})\in\Pi(\mu_{x_{0:n-1}}(\md x_{n}),*),
		      \end{equation*}
		      and \(\pi_{0}(\md x_{0},\md y_{0})\in \Pi(\mu_{0}(\md x_{0}),*)\).
	\end{itemize}
\end{prop}

\begin{thm}
	\label{thm-4-disc}
	Under Assumption~\ref{asmp-4-dual}, we have a dynamic duality formula
	\begin{equation}
		\label{eqn-d}
		V_{\c}=\sup_{\nu\in\scrP(\cX)}\{\E_{\nu}[f(X)]-L(\cT_{\c}(\mu,\nu))\}=\inf_{\lambda\geq 0}\{L^{*}(\lambda)+U(\lambda)\},
	\end{equation}
	where \(U\) is given by
	\begin{equation}
		\label{eqn-ddual}
		U(\lambda)=\mu_{0}(\sup_{y_{0}\in\cX_{0}}\{ \cdots\mu_{x_{0:N-1}}(\sup_{y_{N}\in\cX_{N}}\{f(y)-\lambda c(x,y)\})\cdots\}).
	\end{equation}
\end{thm}
\begin{rmk}
	If \(N=0\), we retrieve the classical Wasserstein DRO duality results \citep{blanchet19Quantifying,bartl20Computational,zhang24Short} in this static setting.

\end{rmk}

\begin{proof}

	We first show that the following quantities are well-defined:
	\[U_{N}(x_{0:N-1},y_{0:N-1}):=\sup_{\pi_{N}\in\Pi(\mu_{x_{0:N-1}},*)}\E_{\pi_{N}}[f(y_{0:N-1},Y_{N})-\lambda c(x_{0:N-1},X_{N},y_{0:N-1},Y_N)]\]
	and for \(1\leq n\leq N-1\)
	\begin{equation*}
		U_{n}(x_{0:n-1},y_{0:n-1}):=\sup_{\pi_{n}\in\Pi(\mu_{x_{0:n-1}},*)}\E_{\pi_{n}}[U_{n+1}(x_{0:n-1},X_{n},y_{0:n-1},Y_{n})].
	\end{equation*}
	We claim \(U_{N}\) is upper semi-analytic.
	It follows from  \citet[Proposition 7.48]{bertsekas96Stochastic} that
	\begin{equation*}
		D=\{(x_{0:N-1},y_{0:N-1},\pi):\,(x,y)\in\cX\times\cX,\,\pi\in \Pi(\mu_{x_{0:N-1}},*)\}
	\end{equation*}
	is Borel, and
	\begin{equation*}
		(x_{0:N-1},y_{0:N-1},\pi) \mapsto \E_{\pi}[f(Y)-\lambda c(X,Y)]
	\end{equation*}
	is upper semi-analytic.
	By \citet[Proposition 7.47]{bertsekas96Stochastic}, we obtain \(U_{N}\) is upper semi-analytic.
	Therefore, \(U_{N}\) is universally measurable, and this implies \(U_{N-1}\) is well-defined.
	Recursively, we can show \(U_{n}\) is again upper semi-analytic for any \(1\leq n\leq N\).
	By Proposition~\ref{prop-4-couple}, we decompose the optimization problem \(\sup_{\pi\in\Pi_{\c}(\mu,*)}\E_{\pi}[f(Y)-\lambda c(X,Y)]\) into single step problems as
	\begin{align*}
		U(\lambda) & =\sup_{\pi\in\Pi_{\c}(\mu,*)}\E_{\pi}[f(Y)-\lambda c(X,Y)]                                                                                    \\
		           & =\sup_{\pi_{0}\in\Pi(\mu_{0},*)}\E_{\pi_{0}}\Bigl[\cdots\sup_{\pi_{N}\in\Pi(\mu_{x_{1:N-1}},*)}\E_{\pi_{N}}[f(Y)-\lambda c(X,Y)]\cdots\Bigr].
	\end{align*}
	As  we have shown \(U_{n}\) is upper semi-analytic for any \(1\leq n \leq N\), applying Lemma~\ref{lem-static} we derive \eqref{eqn-ddual}.
\end{proof}

Our next result answers the question when the bi-causal and the causal ambiguity sets are equivalent in terms of their corresponding distributional risks.
\begin{asmp}
	\label{asmp-4-reg}
	We assume there exists \(p\geq 1\) such that the following conditions hold:
	\begin{enumerate}[label=(\roman*)]
		\item \(L:\bbR^{*}\to\bbR^{*}\) is non-decreasing and continuous on its domain \(\{L<\infty\}\).
		\item \(c:\cX\times\cX\to\bbR\) is continuous and has a polynomial growth
		      \[|c(x,y)|\leq C(1+d_{\cX}(\tilde{x},x)^{p}+d_{\cX}(\tilde{x},y)^{p})\quad \text{ for some } \tilde{x}\in\cX.\]
		\item \(f:\cX\to \bbR\) is continuous and has a polynomial growth \[|f(x)|\leq C(1+d_{\cX}(\tilde{x},x)^{p})\quad \text{ for some } \tilde{x}\in\cX.\]
	\end{enumerate}
\end{asmp}

\begin{thm}
	\label{thm-4-reg}
	We assume that \(\cX\) has no isolated points.
	Under Assumption~\ref{asmp-4-reg}, we have \(V_{\c}=V_{\bc}\), i.e.,
	\begin{equation*}
		\sup_{\nu\in\scrP(\cX)}\{\E_{\nu}[f(X)]-L(\cT_{\c}(\mu,\nu))\}=\sup_{\nu\in\scrP(\cX)}\{\E_{\nu}[f(X)]-L(\cT_{\bc}(\mu,\nu))\}.
	\end{equation*}
\end{thm}
\begin{proof}
	We lift the problem to the space of adapted stochastic processes introduced in Section \ref{sec-pre}.
	Recall by \(\AP\) we denote the space of adapted stochastic processes with paths in \(\cX\), and by \(\NP\) we denote the space of naturally filtered stochastic processes.
	It is clear that we can reformulate the desired identity as
	\begin{equation*}
		\sup_{\bbY\in\NP}\{\E_{\Q}[f(Y)]-L(\cT_{\c}(\bbX,\bbY))\}=\sup_{\bbY\in\NP}\{\E_{\Q}[f(Y)]-L(\cT_{\bc}(\bbX,\bbY))\},
	\end{equation*}
	where \(\bbX=(\cX,\cF,\bF,\mu,X)\) and \(\bbY=(\Omega^{\bbY},\cF^{\bbY},\bF^{\bbY},\Q,Y)\).
	Since we assume \(\cX\) has no isolated points, by \citet[Theorem 5.4]{bartl24Wasserstein} \(\NP\) is a dense subset of \(\AP\) in \(\AW_{p}\).
	By Assumption~\ref{asmp-4-reg}  (ii) and \citet[Theorem 3.6]{eckstein24Computational}, both \(\bbY\mapsto\cT_{\c}(\bbX,\bbY)\) and  \(\bbY\mapsto\cT_{\bc}(\bbX,\bbY)\)  are continuous with respect to \(\AW_{p}\).
	Combining these facts with Assumption~\ref{asmp-4-reg} (iii), we derive
	\begin{equation*}
		\sup_{\bbY\in\NP}\{\E_{\Q}[f(Y)]-L(\cT_{\c}(\bbX,\bbY))\}= \sup_{\bbY\in\AP}\{\E_{\Q}[f(Y)]-L(\cT_{\c}(\bbX,\bbY))\},
	\end{equation*}
	and
	\begin{equation*}
		\sup_{\bbY\in\NP}\{\E_{\Q}[f(Y)]-L(\cT_{\bc}(\bbX,\bbY))\}= \sup_{\bbY\in\AP}\{\E_{\Q}[f(Y)]-L(\cT_{\bc}(\bbX,\bbY))\}.
	\end{equation*}
	For any \(\bbY\in \AP\), we construct \(\widetilde{\bbY}=(\Omega^{\bbY},\cF^{\bbY},\widetilde{\bF}^{\bbY},Q,Y)\in \AP\) where \(\widetilde{\bF}^{\bbY}=\{\cF^{\bbY}\}_{t\in I}\) is the constant filtration with the richest \(\sigma\)-algebra.
	This yields the inclusion \(\Pi_{\c}(\bbX,\bbY)\subseteq \Pi_{\bc}(\bbX,\widetilde{\bbY})\) as any coupling is causal form \(\widetilde{\bbY}\) to \(\bbX\).
	Hence, we deduce \(\cT_{\c}(\bbX,\bbY)\geq \cT_{\bc}(\bbX,\widetilde{\bbY})\), and together with Assumption~\ref{asmp-4-reg} (i) we show
	\begin{align*}
		\sup_{\bbY\in\AP}\{\E_{\Q}[f(Y)]-L(\cT_{\c}(\bbX,\bbY))\} & =\sup_{\bbY\in\NP}\{\E_{\Q}[f(Y)]-L(\cT_{\c}(\bbX,\bbY))\}                  \\
		                                                          & \leq \sup_{\bbY\in\NP}\{\E_{\Q}[f(Y)]-L(\cT_{\bc}(\bbX,\widetilde{\bbY}))\} \\
		                                                          & \leq \sup_{\bbY\in\AP}\{\E_{\Q}[f(Y)]-L(\cT_{\bc}(\bbX,\bbY))\}.
	\end{align*}
	The reverse direction is trivial, and we conclude the proof.
\end{proof}

We remark that this result generalizes the arguments in \citet[Lemma 3.1]{bartl23Sensitivity} to abstract Polish spaces.

\section{Continuous-time results}
\label{sec-4-cont}
We set up the continuous time problem as follows.
Let \((\Omega,\cF,P)\) be a Polish probability space supporting a standard \(n\)-dimensional Brownian motion \(B\).
We consider a path-dependent SDE given by
\begin{equation}
	\label{eqn-4-nsde}
	\alpha(t)=\int_{0}^{t}b(s,\alpha(\cdot \wedge s))\md s +\int_{0}^{t}\sigma(s, \alpha(\cdot \wedge s))\md B(s).
\end{equation}
\begin{asmp}
	\label{asmp-4-sde}
	We assume  SDE \eqref{eqn-4-nsde} has a unique strong solution \(\alpha\), and the law of \(\alpha(t)\) is non-atomic for any \(0<t\leq T\).
\end{asmp}
The strong well-posedness holds, for example in \citet[Theorem 8.1]{bally16Stochastic}, if the coefficients \(b\) and \(\sigma\) are Lipschitz continuous and have linear growth.
In a Markovian setting, the celebrated H\"ormander theorem gives a sufficient condition when the law of \(\alpha(t)\) is non-atomic.
More recently, it has been extended to a path-dependent SDE setting in \citet{ohashi21Smoothness}.

We take the reference model as \(\mu= \alpha_{\#}P\) and  are interested in the case where \(c(x,y)=\|x-y\|_{H^{1}_{0}}^{2}\) as the square of the Cameron--Martin norm on the path space \(\cX=C_{0}([0,T];\bbR^{n})\).
\begin{asmp}
	\label{asmp-4-cont-f}
	We assume the following conditions:
	\begin{enumerate}[label=(\roman*)]
		\item   \(L:\bbR^{*}\to\bbR^{*}\) is a non-decreasing closed proper convex function with \(L(0)=0\) and \(L(+\infty)=+\infty\).
		\item \(f:\cX\to\bbR\) is continuous and has a quadratic growth \(|f(x)|\leq C(1+\|x\|_{\infty}^{2})\).
	\end{enumerate}
\end{asmp}

\begin{thm}
	\label{thm-4-cont}
	Assume  the semi-linear path-dependent PDE
	\begin{equation}
		\label{eqn-4-ppde}
		\begin{aligned}
			\cD U & (t, x, y ;\lambda)  + b(t,x)^{\intercal}(\nabla_{x}+\nabla_{y})U(t,x,y;\lambda)                                                                                                  \\
			      & + \frac{1}{2} \tr(\sigma(t,x)^{\intercal}\sigma(t,x)(\nabla_{xx}^{2}+\nabla_{yy}^{2}+2\nabla_{xy}^{2})U(t,x,y;\lambda)) +\frac{1}{4\lambda} \|\nabla_{y}U(t,x,y;\lambda)\|^{2}=0
		\end{aligned}
	\end{equation}
	with boundary condition \(U(T,x,y;\lambda)=f(y)\) has a non-anticipative \(C_{b}^{1,2}\) solution \(U\) such that
	\(\nabla_{y}U(t,x,y)\) is Lipschitz with a linear growth.
	Then under Assumptions~\ref{asmp-4-sde} and~\ref{asmp-4-cont-f} we have
	\begin{equation*}
		V_{\c}=\sup_{\nu\in \scrP(\cX)}\{\E_{\nu}[f(Y)]-L(\cT_{\c}(\mu,\nu))\}=\inf_{\lambda\geq 0}\{L^{*}(\lambda)+U(\lambda)\},
	\end{equation*}
	where
	\begin{equation*}
		U(\lambda):=U(0,0,0;\lambda)=\sup_{\pi\in \Pi_{\c}(\mu,*)}\E_{\pi}[f(Y)-\lambda c(X,Y)].
	\end{equation*}
\end{thm}

\begin{rmk}
	When the reference model \(\mu\) is the Wiener measure \(\gamma\), the above path-dependent PDE \eqref{eqn-4-ppde} reads as
	\begin{equation*}
		\cD U (t,x,y;\lambda) + \frac{1}{2}\tr((\nabla_{xx}^{2}+\nabla_{yy}^{2}+2\nabla_{xy})U(t,x,y;\lambda)) +\frac{1}{4\lambda} \|\nabla_{y}U(t,x,y;\lambda)\|^{2}=0.
	\end{equation*}
	The change of the variable \(U=2\lambda \log (W)\) turns the above equation to a linear PDE and yields a unique classical solution given explicitly by
	\begin{equation*}
		U(t,x,y;\lambda)= 2\lambda \log \lb( \int_{C([0,T];\bbR^{n})}\exp\lb(\frac{f(z)}{2\lambda}\rb) \gamma_{y(\cdot\wedge t)}(\md z)\rb).
	\end{equation*}
\end{rmk}

The following result is adapted from \citet[Theorem 1.2]{beiglbock20Denseness}  which shows  that the set of Monge causal couplings with a first marginal \(\mu\) are dense in \(\Pi_{\c}(\mu,*)\).
We postpone the proof to the end of the current section.

\begin{lem}
	\label{lem-4-dense-monge}
	Let \(\Gamma_{\c}(\mu,*)\) denote the set of  Monge causal couplings with a fixed first marginal \(\mu\).
	Under Assumption~\ref{asmp-4-sde}, we have
	\begin{equation*}
		\sup_{\pi\in\Pi_{\c}(\mu,*)}\E_{\pi}[f(Y)-\lambda c(X,Y)]=\sup_{\pi\in\Gamma_{\c}(\mu,*)}\E_{\pi}[f(Y)-\lambda c(X,Y)].
	\end{equation*}
\end{lem}

\begin{proof}[Proof of Theorem~\ref{thm-4-cont}]
	\emph{Step 1.} We first show the solution to \eqref{eqn-4-ppde} coincides with the value function of a non-Markovian stochastic optimal control problem.
	For  brevity of the notation, we fix \(\lambda>0\) and omit the \(\lambda\) argument in \(U\).
	For two paths \(\omega,\eta\in C_{0}([0,T];\bbR^{n})\) with \(\omega(s)=\eta(s)\) we introduce their concatenation at time \(s\) as
	\begin{equation*}
		\omega\otimes_{s} \eta (t):= \begin{cases}
			\omega(t), & \text{if } t\leq s, \\
			\eta(t),   & \text{if } t>s.
		\end{cases}
	\end{equation*}
	Let \((\alpha^{s,x},\beta^{s,x,y,u})\) be a controlled system given by
	\begin{equation*}
		\left\{
		\begin{aligned}
			\alpha^{s,x}(\cdot)    & = x(s)+\int_{s}^{\cdot}b(t, x\otimes_{s}\alpha^{s,x})\md t +\int_{s}^{\cdot}\sigma(t, x\otimes_{s}\alpha^{s,x})\md B(t),        \\
			\beta^{s,x,y,u}(\cdot) & = y(s)+ \int_{s}^{\cdot}[b(t,x\otimes_{s}\alpha^{s,x})+u(t)]\md t +\int_{s}^{\cdot}\sigma(t, x\otimes_{s}\alpha^{s,x})\md B(t),
		\end{aligned}\right.
	\end{equation*}
	with the aim of maximizing the objective
	\begin{equation*}
		J(s,x,y, u)=\E_{P}\lb[f(y\otimes_{s} \beta^{s,x,y,u}) -\lambda \int_{s}^{T}\|u(t)\|^{2}\md t\rb]
	\end{equation*}
	over  \( \ad([s,T])\) the set of \(\bF^{B}\)-progressively measurable processes.
	The \emph{Hamiltonian} associated to the control problem \(\cH:[0,T]\times C([0,T];\bbR^{2n})\times (\bbR^{n})^{2}\times (\bbR^{n\times n})^{3}\) is given by
	\begin{align*}
		 & \qquad \cH(t,x,y,\rho_{x},\rho_{y},A_{xx},A_{yy},A_{xy})                                                                                                                                  \\
		 & = \sup_{u\in\bbR^{n}}\lb\{ b(t,x)^{\intercal}(\rho_{x}+\rho_{y})+\frac{1}{2}\tr(\sigma(t,x)^{\intercal}\sigma(t,x)(A_{xx}+A_{yy}+2A_{xy}))+ u^{\intercal}\rho_{y} -\lambda \|u\|^{2}\rb\} \\
		 & =b(t,x)^{\intercal}(\rho_{x}+\rho_{y})+\frac{1}{2}\tr(\sigma(t,x)^{\intercal}\sigma(t,x)(A_{xx}+A_{yy}+2A_{xy})) +\frac{\|\rho_{y}\|^{2}}{4\lambda}.
	\end{align*}
	In particular, the above supremum is attained when \(u= \frac{1}{2\lambda}\rho_{y}\).
	Since we assume \(U\) is a \(C^{1,2}_{b}\) solution, by \citet[Theorem 8.16]{bally16Stochastic}, it is clear that
	\begin{equation*}
		U(s,x,y)\geq \sup_{u\in \ad([s,T])} J(s,x,y,u).
	\end{equation*}
	On the other hand, as we assume \(\nabla_{y}U\) is Lipschitz, there exists a unique solution to
	\begin{equation}
		\label{eqn-4-optimal}
		\left\{\begin{aligned}
			\alpha^{s,x}(\cdot)      & = x(s)+\int_{s}^{\cdot}b(t, x\otimes_{s}\alpha^{s,x})\md t +\int_{s}^{\cdot}\sigma(t, x\otimes_{s}\alpha^{s,x})\md B(t),                                          \\
			\beta^{s,x,y}_{*}(\cdot) & = y(s)+ \int_{s}^{\cdot}\lb[b(t,x\otimes_{s}\alpha^{s,x})+ \frac{1}{2\lambda}\nabla_{y} U(t,x\otimes_{s} \alpha^{s,x} , y \otimes_{s} \beta^{s,x,y}_{*})\rb]\md t \\
			                         & \hspace{7cm} +\int_{s}^{\cdot}\sigma(t, x\otimes_{s}\alpha^{s,x})\md B(t).
		\end{aligned}\right.
	\end{equation}
	This yields that \(u_{*}(t)= \frac{1}{2\lambda}\nabla_{y} U(t,x\otimes_{s} \alpha^{s,x} , y \otimes_{s} \beta^{s,x,y}_{*})\in \ad([s,T])\) is an optimal control since \(u=\frac{1}{2\lambda}\rho_{y}\) attained the supremum in the Hamiltonian \(\cH\).
	Therefore, we derive
	\begin{equation*}
		U(s,x,y)= J(s,x,y,u_{*})=\sup_{u\in\ad([s,T])}J(s,x,y,u).
	\end{equation*}

	\emph{Step 2.} We show  \(U(0,0,0;\lambda)=\sup_{\pi\in \Pi_{\c}(\mu,*)}\E_{\pi}[f(Y)-\lambda c(X,Y)]\).
	We notice that  \((B,\alpha)_{\#}\P\) is a bi-causal coupling by \citet[Theorem 3.2]{cont24Causal}, and in particular, \((\alpha,B)_{\#}\P\) is causal.
	Moreover, \((B,u)_{\#}\P\) is a causal coupling for any \(u\in\ad([0,T])\) since \(u\) is \(\bF^{B}\)-progressively measurable.
	Hence, by the gluing lemma of \citet[Lemma 3.1]{eckstein24Computational}, we derive  that \((\alpha,u)_{\#}\P\) is a causal coupling which further implies \(\pi^{u}:=(\alpha,\beta^{u})_{\#}\P\)  is a causal coupling as \(\md \beta^{u}=\md \alpha+ \md u\).
	This gives us
	\begin{align*}
		U(0,0,0;\lambda)=\E_{P}\lb[f(\beta^{u_{*}})-\lambda\int_{0}^{T}\|u^{*}(t)\|^{2}\md t\rb] & =\E_{\pi^{u_{*}}}[f(Y)-\lambda c(X,Y)]                           \\
		                                                                                         & \leq \sup_{\pi\in \Pi_{\c}(\mu,*)}\E_{\pi}[f(Y)-\lambda c(X,Y)].
	\end{align*}
	For the reverse direction, let \(\widehat{\ad}([0,T];\bbR^{n})\) be the set of \(\bF^{\alpha}\)-progressively measurable processes.
	Recall \(\Gamma_{\c}(\mu,*)\) is the set of causal Monge couplings with a fixed first marginal \(\mu\).
	For any \(\pi\in \Gamma_{\c}(\mu,*)\) with \(\E_{\pi}[c(X,Y)]<\infty\), there exists a non-anticipative map \(\Phi:[0,T]\times C([0,T];\bbR^{n})\to \bbR^{n}\) such that \(Y(t)= X(t) + \int_{0}^{t} \Phi(s,X)\md s\) holds \(\pi\)-a.s.
	This induces a control \(u^{\pi}(t):= \Phi(t,\alpha)\in \widehat{\ad}([0,T])\).
	Therefore, together with Lemma~\ref{lem-4-dense-monge} we have
	\begin{align*}
		U(0,0,0;\lambda)\geq \sup_{u\in \widehat{\ad}([0,T];\bbR^{n})}J(0,0,0,u) & \geq  \sup_{\pi\in \Gamma_{\c}(\mu,*)}\E_{\pi}[f(Y)-\lambda c(X,Y)] \\
		                                                                         & = \sup_{\pi\in \Pi_{\c}(\mu,*)}\E_{\pi}[f(Y)-\lambda c(X,Y)].
	\end{align*}
	Notice that Assumption~\ref{asmp-4-cont-f} implies Assumption~\ref{asmp-4-dual}.
	Applying Proposition~\ref{prop-4-dual}, we conclude the proof.

\end{proof}

\begin{prop}
	Under the conditions of Theorem~\ref{thm-4-cont}, if further \(U(\lambda)\) is differentiable on its domain, then we have \(V_{\c}=V_{\bc}\), i.e.,
	\begin{equation*}
		\sup_{\nu\in \scrP(\cX)}\{\E_{\nu}[f(Y)]-L(\cT_{\c}(\mu,\nu))\}=\sup_{\nu\in \scrP(\cX)}\{\E_{\nu}[f(Y)]-L(\cT_{\bc}(\mu,\nu))\}.
	\end{equation*}
\end{prop}
\begin{proof}
	\emph{Step 1.}  We recall that \(\pi_{*}=(\alpha,\beta_{*})_{\#}P\) attains the supremum in \[U(\lambda)=\sup_{\pi\in \Pi_{\c}(\mu,*)}\E_{\pi}[f(Y)-\lambda c(X,Y)]\] where \((\alpha,\beta_{*})\) is given in \eqref{eqn-4-optimal}.
	We claim that \(\pi_{*}\) is actually a bi-causal coupling.
	To see this, we can reformulate \eqref{eqn-4-optimal} as
	\begin{equation*}
		\alpha(t) =-\int_{0}^{t}\frac{1}{2\lambda}\nabla_{y} U(s,\alpha,  \beta_{*})\md s  + \beta_{*}(t).
	\end{equation*}
	As we assume  \(\nabla_{y}U\) is Lipschitz, there exists a non-anticipative functional \(F\) such that \(\alpha(t)=F(t,\beta_{*})\) holds \(\pi_{*}\)-a.s.
	This implies that \(\pi_{*}\) is a causal from \(\beta_{*}\) to \(\alpha\), and hence it is a bi-causal coupling.

	\emph{Step 2.}  Following Proposition~\ref{prop-4-dual},
there exists a minimizer \(\lambda_{*}\)  of \(\inf_{\lambda\geq 0}\{L^{*}(\lambda)+U(\lambda)\}\).
This implies \(0\in \partial L^{*}(\lambda_{*})+\partial U(\lambda_{*})\).
	We notice that 	\begin{align*}
		  U(\lambda)-U(\lambda_{*}) & \geq  \E_{\pi_{*}}[f(Y)-\lambda c(X,Y)]-\E_{\pi_{*}}[f(Y)-\lambda_{*} c(X,Y)] \\
		                                   & = -\E_{\pi_{*}}[c(X,Y)] (\lambda-\lambda_{*}).
	\end{align*}
	Together with the assumption that \(U\) is differentiable, we obtain \(\{-\E_{\pi_{*}}[c(X,Y)]\} = \partial U(\lambda_{*})\).
	Hence, we deduce that \(\E_{\pi_{*}}[c(X,Y)]\in \partial L^{*}(\lambda_{*})\).
	Therefore, by Proposition~\ref{prop-4-subdiff} we obtain
	\begin{equation*}
		L^{*}(\lambda_{*})=\lambda_{*}\E_{\pi_{*}}[c(X,Y)]-L(\E_{\pi_{*}}[c(X,Y)]).
	\end{equation*}
	It follows from the bi-causality of \(\pi_{*}\) and Proposition~\ref{prop-4-dual} that
	\begin{equation*}
		V_{\c}=\E_{\pi_{*}}[f(Y)]-L(\E_{\pi_{*}}[c(X,Y)])\leq \sup_{\nu\in \scrP(\cX)}\{\E_{\nu}[f(Y)]-L(\cT_{\bc}(\mu,\nu))\}=V_{\bc}.
	\end{equation*}
	The reverse direction is trivial as \(\Pi_{\bc}(\mu,*)\subseteq\Pi_{\c}(\mu,*)\).

\end{proof}

\begin{proof}[Proof of Lemma~\ref{lem-4-dense-monge}]
	When \(\lambda=0\), it is clear that the identity holds with both sides equal to \(\sup_{x\in\cX}f(x)\).
	For \(\lambda>0\), we rely on the proof of \citet[Theorem 5.2]{beiglbock20Denseness}.
	Let \(Z=Y-X\).
	We fix a \(\pi_{0}\in \Pi_{\c}(\mu,*)\) such that \(\E_{\pi_{0}}[\|Z\|^{2}_{H^{1}_{0}}]<\infty\).
	We notice \(\tilde{\pi}=(X,Z)_{\#}\pi_{0}\) is also in \(\Pi_{\c}(\mu,*)\).
	By \citet[Theorem 1.2]{beiglbock20Denseness}, there exist piecewise linear \(\bF^{X}\)-adapted processes \(\{Z_{n}\}_{n\geq 1}\) such that  \((X,Z_{n})_{\#}\pi_{0}\) converges weakly to \(\tilde{\pi}\).
	In this case, \(Z_{n}\) can be further chosen such as \(\lim_{n\to\infty}\E_{\pi_{0} }[\|Z_{n}\|^{2}_{H^{1}_{0}}]=\E_{\pi_{0}}[\|Z\|^{2}_{H^{1}_{0}}].\)
	Therefore, \(\pi_{n}=(X,X+Z_{n})_{\#}\pi_{0}\in \Gamma_{\c}(\mu,*)\) and satisfies
	\begin{equation*}
		\sup_{\pi\in\Gamma_{\c}(\mu,*)}\E_{\pi}[f(Y)-\lambda c(X,Y)] \geq \lim_{n\to\infty}\E_{\pi_{n}}[f(Y)-\lambda c(X,Y)]= \E_{\pi_{0}}[f(Y)-\lambda c(X,Y)].
	\end{equation*}
	The arbitrary choice of \(\pi_{0}\) allows us to derive
	\begin{equation*}
		\sup_{\pi\in\Gamma_{\c}(\mu,*)}\E_{\pi}[f(Y)-\lambda c(X,Y)] \geq  \sup_{\pi\in\Pi_{\c}(\mu,*)}\E_{\pi}[f(Y)-\lambda c(X,Y)].
	\end{equation*}
	The reverse direction is trivial as \(\Gamma_{\c}(\mu,*)\subseteq \Pi_{\c}(\mu,*)\).
\end{proof}

\section{Distributionally robust expected shortfall}
\label{sec-4-es}
In this section, we consider the extension of our duality formula from a linear expectation to a convex risk measure in a discrete-time setting.
Let \(f:\cX\to\bbR\) be a path-dependent payoff of a contingent claim  and \(\alpha\in(0,1)\) a risk aversion parameter.
The expected shortfall  \(\av\) is a law-invariant convex risk measure given by
\begin{equation*}
	\av(f;\mu):=\sup_{\eta\ll \mu,\, \frac{\md \eta}{\md \mu}\leq \alpha^{-1}}\E_{\eta}[f(X)].
\end{equation*}
We are interested in its performance under a model misspecification, and write its causal distributionally robust counterpart  \(\bav\) as
\begin{equation*}
	\bav(f;\mu):=\sup_{\nu\in\scrP(\cX)}\{\av(f;\nu)-L(\cT_{\c}(\mu,\nu))\}.
\end{equation*}
\begin{asmp}
	\label{asmp-4-es}
	We assume the following conditions:
	\begin{enumerate}[label=(\roman*)]
		\item   \(L:\bbR^{*}\to\bbR^{*}\) is a non-decreasing closed proper convex function with \(L(0)=0\) and \(L(+\infty)=+\infty\).
		\item \(c:\cX\times \cX\to\bbR^{*}\) is  lower semi-continuous and non-negative with \(c(x,y)=0\) if and only if \(x=y\) for any \(x,y\in\cX\).
		\item   \(f:\cX\to \bbR\) is non-negative and upper semi-continuous.
	\end{enumerate}
\end{asmp}

We derive the following duality representation of \(\bav(f)\).

\begin{thm}
	\label{thm-avar}
	Under Assumption~\ref{asmp-4-es}, we have
	\begin{align*}
		\bav(f;\mu) & =\inf_{\lambda,\gamma\geq 0}\{L^{*}(\lambda)+\gamma + U(\lambda,\gamma)\},
	\end{align*}
	where \(U(\lambda,\gamma)=\sup_{\pi\in\Pi_{\c}(\mu,*)}\E_{\pi}[\alpha^{-1}(f(Y)-\gamma)^{+}-\lambda c(X,Y)]\) can be computed by
	\begin{equation*}
		U(\lambda,\gamma)=\mu_{0}(\sup_{y_{0}\in\cX_{0}}\{ \cdots\mu_{x_{0:N-1}}(\sup_{y_{N}\in\cX_{N}}\{\alpha^{-1}(f(y)-\gamma)^{+}-\lambda c(x,y)\})\cdots\}).
	\end{equation*}
\end{thm}

\begin{proof}
	It is shown in \citet[Theorem 4.39]{follmer08Stochastic} that  the expected shortfall has a dual representation given by
	\begin{equation*}
		\av(f;\mu)= \inf_{\gamma\geq 0}\{\gamma + \alpha^{-1}\E_{\mu}[(f(X)-\gamma)^{+}]\}.
	\end{equation*}
	In particular, this implies \(\mu\mapsto  \av(f;\mu)\) is a concave functional.
	By Remark~\ref{rmk-4-ext}, we derive
	\begin{align*}
		\bav(f;\mu) & =\inf_{\lambda\geq 0} \{L^{*}{\lambda}+\sup_{\nu\in\scrP(\cX)}\{\av(f;\mu)-\cT_{\c}(\mu,\nu)\}\}                                                         \\
		            & =\inf_{\lambda\geq 0}\{L^{*}(\lambda)+\sup_{\pi\in\Pi_{\c}(\mu,*)}\inf_{\gamma\geq0}\{\gamma +\E_{\pi}[\alpha^{-1}(f(Y)-\gamma)^{+}-\lambda c(X,Y)]\}\}.
	\end{align*}
	Now it suffices to  show that we can swap the inner \(\sup\) and \(\inf\) and apply the same arguments as Theorem~\ref{thm-4-disc}.
	We fix \(\lambda\geq 0\) and consider the first case when
	\begin{equation} \label{eqn-fs}\sup_{\pi\in\Pi_{\c}(\mu,*)}\E_{\pi}[\alpha^{-1}f(Y)-\lambda c(X,Y)]=+\infty.\end{equation}
	The inequality
	\begin{align*}
		\gamma + \E_{\pi}[\alpha^{-1}(f(Y)-\gamma)^{+}-\lambda c(X,Y)]\geq \gamma -\alpha^{-1}\gamma +\E_{\pi}[\alpha^{-1}f(Y)  -\lambda c(X,Y)]
	\end{align*}
	yields
	\begin{equation*}
		\inf_{\gamma\geq 0}\sup_{\pi\in\Pi_{\c}(\mu,*)}\{\gamma + \E_{\pi}[\alpha^{-1}(f(Y)-\gamma)^{+}-\lambda c(X,Y)]\}=+\infty.
	\end{equation*}
	On the other hand, we take a sequence of \(\pi_{n}\in\Pi_{\c}(\mu,*)\) such that \(\E_{\pi_{n}}[(\alpha^{-1}f(Y)-\lambda c(X,Y))]\) goes to infinity.
	We write \(\pi_{n}(\md x,\md y)=\mu(\md x) \eta_{n}(x,\md y )\) and define
	\begin{equation*}
		\tilde{\pi}_{n}(\md x,\md y)=\mu(\md x)[\alpha\eta_{n}(x,\md y)+(1-\alpha)\delta_{x}(\md y)].
	\end{equation*}
	It is direct to verify that \(\tilde{\pi}_{n}\) is again causal as it is a linear combination of two causal couplings.
	We further notice
	\begin{align*}
		 & \quad \inf_{\gamma\geq0}\{\gamma+\E_{\tilde{\pi}_{n}}[\alpha^{-1}(f(Y)-\gamma)^{+}-\lambda c(X,Y)]\}                                     \\
		 & =\inf_{\gamma\geq 0} \{\gamma+\alpha^{-1}\E_{\tilde{\pi}_{n}}[(f(Y)-\gamma)^{+}]\} - \lambda \alpha\E_{\pi_{n}}[c(X,Y)]                  \\
		 & \geq\sup_{0\leq \xi\leq \alpha^{-1}, \E_{\tilde{\pi}_{n}}[\xi]\leq 1} \E_{\tilde{\pi}_{n}}[f(Y)\xi] - \lambda \alpha\E_{\pi_{n}}[c(X,Y)] \\
		 & \geq \alpha\E_{\pi_{n}}[\alpha^{-1}f(Y) -\lambda c(X,Y)].
	\end{align*}
	The second equality follows from the duality representation of \(\av\);  the last inequality follows by taking \(\xi=\frac{\md \pi_{n}}{\md \tilde{\pi}_{n}}\in[0,\alpha^{-1}]\).
	Therefore, we deduce the duality holds with
	\begin{equation*}
		\sup_{\pi\in\Pi_{\c}(\mu,*)}\inf_{\gamma\geq 0}\{\gamma + \E_{\pi}[\alpha^{-1}(f(Y)-\gamma)^{+}-\lambda c(X,Y)]\}=+\infty.
	\end{equation*}

	The second case is to consider
	\begin{equation}
		\label{eqn-se}
		\sup_{\pi\in\Pi_{\c}(\mu,*)}\E_{\pi}[\alpha^{-1}f(Y)-\lambda c(X,Y)]<\infty.
	\end{equation}
	We write
	\begin{equation*}
		I:=\inf_{\gamma\geq 0} \sup_{\pi\in\Pi_{\c}(\mu,*)}\{\gamma + \E_{\pi}[\alpha^{-1}(f(Y)-\gamma)^{+}-\lambda c(X,Y)]\}.
	\end{equation*}
	Since \( \gamma_{\max}:=\sup_{\pi\in\Pi_{\c}(\mu,*)}\E_{\pi}[\alpha^{-1}f(Y)-\lambda c(X,Y)]<\infty\), it follows from \citet[Theorem 2]{fan53Minimax} that
	\begin{align*}
		I & =\inf_{\gamma\in[0,\gamma_{\max}]}\sup_{\pi\in\Pi_{\c}(\mu,*)}\{\gamma + \E_{\pi}[\alpha^{-1}(f(Y)-\gamma)^{+}-\lambda c(X,Y)]\}   \\
		  & =\sup_{\pi\in\Pi_{\c}(\mu,*)} \inf_{\gamma\in[0,\gamma_{\max}]}\{\gamma +\E_{\pi}[ \alpha^{-1}(f(Y)-\gamma)^{+}]-\lambda c(X,Y)\}.
	\end{align*}
	Let \(\pi_{\delta}\in\Pi_{\c}(\mu,*)\) such that
	\begin{equation*}
		\inf_{\gamma\in[0,\gamma_{\max}]}\{\gamma+ \E_{\pi_{\delta}}[\alpha^{-1}(f(Y)-\gamma)^{+}-\lambda c(X,Y)]\}\geq I-\delta.
	\end{equation*}
	Since the support of any probability measure on a Polish space is \(\sigma\)-compact, we may assume \(\cX_{n}\) is \(\sigma\)-compact for simplicity.
	In particular, we can write \(\cX_{n}=\lim_{m\to\infty}\cK^{m}_{n}\), where \(\{\cK^{m}_{n}\}_{m\geq 1}\) is ascending and compact.
	We write \(\cK^{m}=\cK^{m}_{0}\times \cK^{m}_{1}\times\cdots\times\cK^{m}_{N}\) and \(\Pi^{m}:=\{\pi\in\Pi_{\c}(\mu,*):\textrm{supp}(\pi)\subseteq \cX\times \cK^{m}\}\).
	Without loss of generality, we assume
	\begin{equation*}
		\label{eqn-4-k}
		\E_{\pi_{\delta}}[\1_{\cX\times(\cK^{m})^{c}}]+\E_{\pi_{\delta}}[\1_{\cX\times(\cK^{m})^{c}}(\alpha^{-1}f(Y)-\lambda c(X,Y))]\leq \frac{1}{m}.
	\end{equation*}
	We fix \(\tilde{x}\in\cK^{1}\) and define \(\tilde{\pi}=[x\mapsto (x,\tilde{x})]_{\#}\mu\).
	We construct a causal coupling \(\pi^{m}\) as
	\begin{equation*}
		\pi^{m}(\cdot)=\pi_{\delta}(\cdot\cap(\cX\times\cK^{m}))+  (1-\pi_{\delta}(\cX\times \cK^{m})\tilde{\pi}(\cdot)\in\Pi^{m}
	\end{equation*}
	--- it follows from the fact that \(\pi_{\delta}(\cdot\cap(\cX\times\cK^{m}))/\pi_{\delta}(\cX\times \cK^{m})\) is causal.
	Therefore, we derive
	\begin{align*}
		 & \qquad \sup_{\pi\in\Pi^{m}}\inf_{\gamma\in [0,\gamma_{\max}]}\{\gamma + \E_{\pi}[\alpha^{-1}(f(Y)-\gamma)^{+}-\lambda c(X,Y)]\}                                      \\
		 & \geq \inf_{\gamma\in[0,\gamma_{\max}]}\{\gamma+ \E_{\pi^{m}}[\alpha^{-1}(f(Y)-\gamma)^{+}-\lambda c(X,Y)]\}                                                          \\
		 & \geq \inf_{\gamma\in[0,\gamma_{\max}]}\{\gamma+ \E_{\pi_{\delta}}[\alpha^{-1}(f(Y)-\gamma)^{+}-\lambda c(X,Y)]-\frac{1}{m}-\frac{1}{m}\E_{\mu}[\lambda c(X,x_{0})]\} \\
		 & \geq I -\delta -\frac{1}{m}(1+\E_{\mu}[\lambda c(X,x_{0})]).
	\end{align*}
	Here, the second last inequality follows from the estimate \eqref{eqn-4-k}.
	On the other hand,  \(\Pi^{m}\) is compact under the weak topology, see  \citet[Theorem 3 (i)]{lassalle18Causal}.
	Therefore, we apply the minimax theorem from \citet{fan53Minimax} and derive
	\begin{align*}
		 & \quad \sup_{\pi\in \Pi_{\c}(\mu,*)}\inf_{\gamma\geq 0}\{\gamma + \E_{\pi}[\alpha^{-1}(f(Y)-\gamma)^{+}-\lambda c(X,Y)]\}                     \\
		 & \geq \lim_{m\to\infty} \sup_{\pi\in\Pi^{m}}\inf_{\gamma\geq 0}\{\gamma + \E_{\pi}[\alpha^{-1}(f(Y)-\gamma)^{+}-\lambda c(X,Y)]\}             \\
		 & = \lim_{m\to\infty} \inf_{\gamma\geq 0} \sup_{\pi\in\Pi^{m}}\{\gamma + \E_{\pi}[\alpha^{-1}(f(Y)-\gamma)^{+}-\lambda c(X,Y)]\}               \\
		 & =\lim_{m\to\infty}\inf_{\gamma \in [0,\gamma_{\max}]}\sup_{\pi\in\Pi^{m}}\{\gamma + \E_{\pi}[\alpha^{-1}(f(Y)-\gamma)^{+}-\lambda c(X,Y)]\}  \\
		 & =\lim_{m\to \infty} \sup_{\pi\in\Pi^{m}}\inf_{\gamma\in [0,\gamma_{\max}]}\{\gamma + \E_{\pi}[\alpha^{-1}(f(Y)-\gamma)^{+}-\lambda c(X,Y)]\} \\
		 & \geq I -\delta.
	\end{align*}
	The arbitrary choice of \(\delta\) allows us to conclude the proof.
\end{proof}

\begin{exam}
	We consider a two-step exotic option.
	Let \(X=(X_{0},X_{1},X_{2})\) be the underlying asset, \(f(x)=(x_{2}-x_{1}+1-K)^{+}\) the payoff of the option.
	We set interest rates and dividends to zero for simplicity.
	Let \(\mu\) be a reference pricing measure of the underlying.
	In this example we assume \(\mu\) is the finite marginal of a geometric Brownian motion \(S\) given by
	\begin{equation*}
		\md S_{t}=\sigma S_{t}\md W_{t},\quad S_{0}=1.
	\end{equation*}
	We set \(\alpha=0.95\), \(\sigma=0.2\),  \((X_{0},X_{1},X_{2})\sim (S_{0},S_{0.5},S_{1})\), \(c(x,y)=|x-y|^{2}\), and \(L=+\infty\1_{(0.3^{2},+\infty]}\).
	In Figure~\ref{fig-avar}, we plot the expected shortfall of the exotic option under different strikes.
	The classical expected shortfall \(\av\), the causal-DR expected shortfall \(\bav\), the W-DR expected shortfall are in solid, dashed, and dotted respectively.
	The gap between the solid and the dashed lines corresponds to the extra risk  coming from the model uncertainty.

	We emphasize that, in certain cases, restricting to non-anticipative model uncertainty does not lead to a reduction in risk.
	For example, consider the calendar spread payoff \(f(x)=(x_{2}-K)^{+}-(x_{1}-K)^{+}\) with any separable cost \(c\).
	\begin{figure}[htbp]
		\centering
		\includegraphics[width=0.5\linewidth]{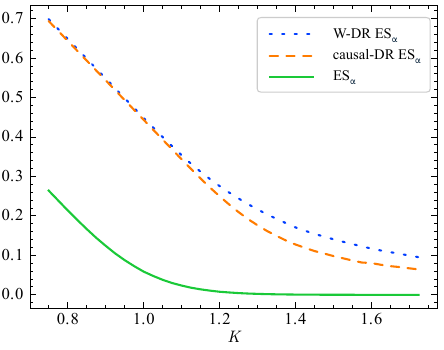}
		\caption{Comparison of  W-DR expected shortfall (dotted), causal-DR expected shortfall (dashed), and standard expected shortfall (solid) for an exotic option.}
		\label{fig-avar}
	\end{figure}
\end{exam}

\begin{rmk}
	We can extend the risk-indifference pricing \citep{xu06Risk} to a model misspecification context.
	For simplicity, we assume zero initial capital and liability.
	The  risk-indifference (sell) price is the minimum price a trader will charge so that the total risk of their portfolio will not increase.
	Here, we take the risk measure as the distributionally robust expected shortfall to counter the model uncertainty.
	The corresponding distributionally robust risk-indifference price is given by
	\begin{equation*}
		\inf_{H\in\bbH}\bav(f+(H {\circ} X)_{N};\mu)-\inf_{H\in\bbH}\bav((H {\circ} X)_{N};\mu),
	\end{equation*}
	where \(\bbH\) is the set of all predictable hedging strategies and \((H {\circ} X)_{N}=\sum_{n=1}^{N}H_{n}(X_{n}-X_{n-1})\) is the discrete stochastic integral.
	By Theorem~\ref{thm-avar}, we write
	\begin{align*}
		\inf_{H\in\bbH}\bav(f+(H {\circ} X)_{N};\mu)=\inf_{\lambda,\gamma\geq 0}\{L^{*}(\lambda)+\gamma+\inf_{H\in\bbH}U(\lambda,\gamma,H)\},
	\end{align*}
	where
	\begin{equation*}
		U(\lambda,\gamma)=\mu_{0}(\sup_{y_{0}\in\cX_{0}}\{ \cdots\mu_{x_{0:N-1}}(\sup_{y_{N}\in\cX_{N}}\{\alpha^{-1}(f(y)+ \sum_{n=1}^{N}H_{n}(y_{n}-y_{n-1})-\gamma)^{+}-\lambda c(x,y)\})\cdots\}).
	\end{equation*}
	As \(\alpha\) goes to 0, it is known that the risk-indifference price under \(\av\) converges to the superhedging price.
	Under the current context, the robust superhedging price is given by
	\begin{equation*}
		\rho(f):=\inf\{x: x+(H {\circ} X)_{N}\geq f - L(\cT_{\c}(\mu,\nu))\quad \text{for some } H\in\bbH \quad \nu\text{-a.s.}\}.
	\end{equation*}
	We stress that this is a nondominated framework and is different from the  setting in \citet{bouchard15Arbitrage}.
	Indeed, we consider all possible   measures \(\nu\in\scrP(\cX)\) with a penalization \(L(\cT_{\c}(\mu,\nu))\), whereas in \citet{bouchard15Arbitrage} the authors considered a collection of measures which is stable under the concatenation of kernels.
	If \(\cX\) is compact and \(f\) is continuous, one can write down a pricing--hedging duality as
	\begin{equation*}
		\rho(f)=\sup_{\nu\in\scrP(\cX)}\sup_{\eta\ll \nu, \eta\in \scrM(\cX)}\{\E_{\eta}[f(X)]-L(\cT_{\c}(\mu,\nu))\},
	\end{equation*}
	where \(\scrM(\cX)\) denotes the set of martingale measures on \(\cX\).
	However, it is unclear if this holds under a more general setting, and we leave it for future work.
\end{rmk}

\section{Extension to optimal stopping problems}
\label{sec-4-os}
In this section, our aim is to derive a duality formula for causal-DR optimal stopping problems.
We stick to a discrete-time setup, and let \(f_{n}:\cX_{0:n}\to \bbR\) for \(n\in I\) denote the payoff if the process stops at time \(n\).
We introduce the optimal stopping problem as
\begin{equation*}
	\os(f;\mu):= \sup_{\tau\in \scrT}\E_{\mu}[f_{\tau}(X_{0:\tau})],
\end{equation*}
where \(\scrT\) is the set of \(\bF\)-stopping times.
The corresponding causal-DR counterpart is given by
\begin{equation}
	\label{eqn-os}
	\bos(f;\mu):=\sup_{\nu\in\scrP(\cX)}\{\sup_{\tau\in\scrT}\E_{\nu}[f_{\tau}(X_{0:\tau})]-L(\cT_{\bc}(\mu,\nu))\}.
\end{equation}

\begin{figure}[htbp]
	\centering
	\includegraphics[width=\textwidth]{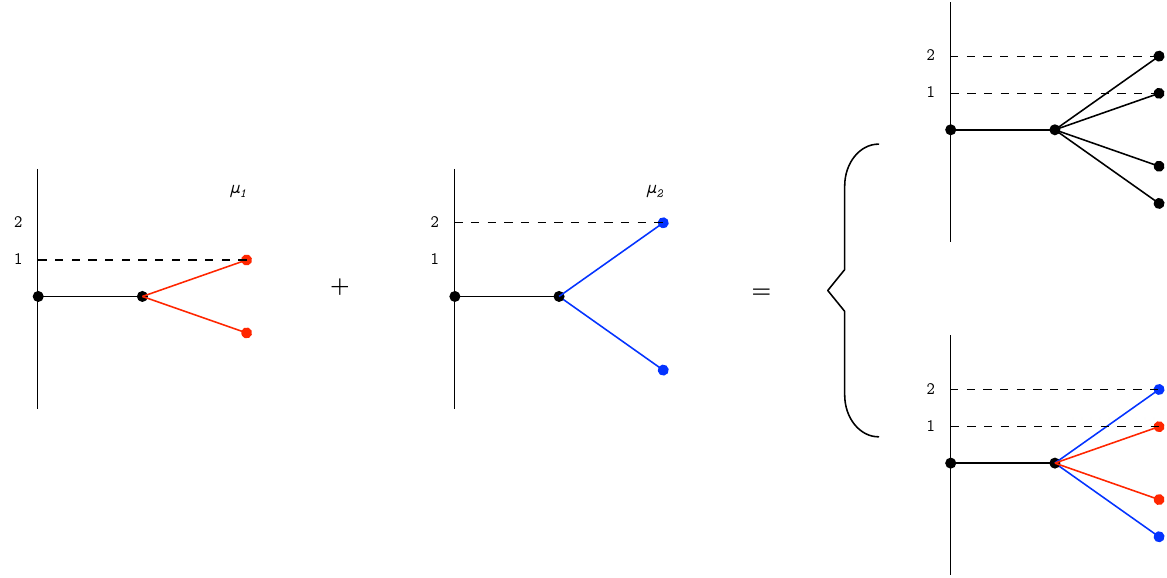}
	\caption{Different  averages of adapted stochastic processes \(\bbX_{1}=(\cX,\cF,\bF,\mu_{1},X)\) and \(\bbX_{2}=(\cX,\cF,\bF,\mu_{2},X)\). The upper process is a naturally filtered stochastic process, while the lower one is the process  carrying extra information from the coin toss.}
	\label{fig-4-os}
\end{figure}
Before we proceed, we first observe that \(\mu\mapsto \os(f;\mu)\) fails to be concave in general.
This prohibits us to apply Remark~\ref{rmk-4-ext} directly as in Section~\ref{sec-4-es}.
In Figure~\ref{fig-4-os}, we illustrate a concrete example by taking \(\mu_{1}=\frac{1}{2}(\delta_{(0,0,1)}+\delta_{(0,0,-1)})\), \(\mu_{2}=\frac{1}{2}(\delta_{(0,0,2)}+\delta_{(0,0,-2)})\), and \(f_{n}(x_{0:n})=|x_{n}^{2}-1|\).
Under this setting, it is direct to compute
\begin{equation*}
	\frac{1}{2}(\os(f;\mu_{1})+\os(f;\mu_{2}))=\frac{1}{2}(1+3)>\frac{3}{2}= \os(f;(\mu_{1}+\mu_{2})/2),
\end{equation*}
and hence \(\os(f;u)\) is not concave in \(\mu\).
In order to retrieve the concavity, we consider a relaxation of the control set \(\scrT\).
Heuristically, we can realize a process with law \(\frac{1}{2}(\mu_{1}+\mu_{2})\) by tossing a fair coin at the initial step and then follow the process with law \(\mu_{1}\) or \(\mu_{2}\) depending on the outcome of the coin.
If the stopping time was with respect to the natural filtration augmented by the coin toss, we would be able to stop the process depending on the outcome of the coin toss.
This would allow us to achieve an optimal stopping value exactly equal to \(\frac{1}{2}(\os(f;\mu_{1})+\os(f;\mu_{2}))\).

In light of the above arguments, we lift the causal-DR optimal stopping problem to the space of adapted processes \(\AP\) to allow richer filtrations.
We define
\begin{equation*}
	\bbos(f;\bbX):=\sup_{\bbY\in\AP}\{\sup_{\tau\in \scrT^{\bbY}} \E_{Q}[f_{\tau}(Y_{0:\tau})]-L(\cT_{\bc}(\bbX,\bbY))\},
\end{equation*}
where \(\bbX=(\cX,\cF,\bF,\mu,X)\), \(\bbY=(\Omega^{\bbY},\cF^{\bbY},\bF^{\bbY},\Q,Y)\), and \(\scrT^{\bbY}\) the set of \(\bF^{\bbY}\)-stopping time.
It is direct to verify \(\bos(f;\mu)\) is equivalent to the above formulation if we replace the optimizing set \(\AP\) by \(\NP\).
One may argue that on \(\AP\) there is no natural linear structure as different adapted stochastic processes may live on different filtered probability spaces.
To resolve this issue, we recall the nested space and the nested distribution introduced in \citet{bartl24Wasserstein}.

\begin{defn}[Nested space]
	We recursively define \(\widehat{X}_{N}=\cX_{N}\) and
	\begin{equation*}
		\widehat{\cX}_{n}=\widehat{\cX}_{n}^{-}\times\widehat{\cX}_{n}^{+}:=\cX_{n}\times\scrP(\widehat{\cX}_{n+1}) \quad\text{ for }0\leq n\leq N-1.
	\end{equation*}
	For any \(\hat{x}_{n}\in \widehat{\cX}_{n}\), we write it as \(\hat{x}_{n}=(\hat{x}_{n}^{-},\hat{x}_{n}^{+})\) with \(\hat{x}_{n}^{-}\in \cX_{n}\) and \(\hat{x}_{n}^{+}\in \scrP(\widehat{\cX}_{n+1})\).
	We say  \(\widehat{\cX}=\widehat{\cX}_{0}\) is the nested space associated to \(\cX\).
\end{defn}

We naturally extend the nested distribution to \(\AP\).
\begin{defn}[Nested distribution]
	\label{defn-4-nested}
	For a  given adapted stochastic process \(\bbX=(\Omega^{\bbX},\cF^{\bbX},\bF^{\bbX},X,P^{\bbX})\in\AP\), the associated information process \(\widehat{X}\) is given recursively by \(\widehat{X}_{N}:=X_{N}\) and
	\begin{equation*}
		\widehat{X}_{n}=(\widehat{X}_{n}^{-},\widehat{X}_{n}^{+}):=(X_{n},\Law(\widehat{X}_{n+1}^{\bbX}|\cF_{n}^{\bbX})).
	\end{equation*}
	We say  \(\widehat{P}^{\bbX}= \Law(\widehat{X}_{0})\in \scrP(\widehat{\cX})\) is the nested distribution associated to \(\bbX\).
\end{defn}

\begin{asmp}
	\label{asmp-4-os}
	We assume there exists \(p\geq 1\) such that the following conditions hold:
	\begin{enumerate}[label=(\roman*)]
		\item \(L:\bbR^{*}\to\bbR^{*}\) is non-decreasing and continuous on its domain \(\{L<\infty\}\).
		\item \(c:\cX\times\cX\to\bbR\) is continuous and has a polynomial growth
		      \[|c(x,y)|\leq C(1+d_{\cX}(\tilde{x},x)^{p}+d_{\cX}(\tilde{x},y)^{p})\quad \text{ for some } \tilde{x}\in\cX.\]
		\item \(f_{n}:\cX_{0:n}\to \bbR\) is continuous and has a polynomial growth \[|f_{n}(x_{0:n})|\leq C(1+d_{\cX}(\tilde{x}_{0:n},x_{0:n})^{p})\quad \text{ for some } \tilde{x}_{0:n}\in\cX_{0:n}.\]
	\end{enumerate}
\end{asmp}

The following result shows that  the bi-causal optimal transport problem between two adapted stochastic processes can be solved by a dynamic programming principle, and this is equivalent to a classical optimal transport problem on the nested space.
The proof is similar to \citet[Theorem 3.10]{bartl24Wasserstein}, and we postpone it to the end of this section.

\begin{prop}
	\label{prop-4-bt}
	Let \(\bbX=(\Omega^{\bbX},\cF^{\bbX},\bF^{\bbX},\P,X)\) and \(\bbY=(\Omega^{\bbY},\cF^{\bbY},\bF^{\bbY},\Q,Y)\) be two adapted stochastic processes.
	Under Assumption~\ref{asmp-4-os}, there exists a continuous function \(\hat{c}:\widehat{\cX}\times \widehat{\cX}\to \bbR\) such that
	\begin{equation*}
		\cT_{\bc}(\bbX,\bbY)=\inf_{\pi\in \Pi(\widehat{P},\widehat{Q})}\E_{\pi}[\hat{c}(\widehat{X},\widehat{Y})]:=\widehat{\cT}(\widehat{P},\widehat{Q}).
	\end{equation*}
\end{prop}

\begin{thm}
	\label{thm-4-os}
	We recursively define \(\hat{f}_{n}:\widehat{\cX}_{0:n}\to \bbR\) for \(n\in I\) as follows.
	Let \(f_{N}(\hat{x}_{0:N}):=f(x_{0:N}^{-})\) and for \(n=N-1,\dots,0\)
	\begin{equation*}
		\hat{f}_{n}(\hat{x}_{0:n}):=\max \{f_{n}(\hat{x}_{0:n}^{-}),\hat{x}_{n}^{+}(\hat{f}_{n+1}(\hat{x}_{0:n},\cdot))\}.
	\end{equation*}
	We assume \(\cX\) has no isolated points.
	Under Assumption~\ref{asmp-4-os}, we have
	\begin{equation*}
		\bos(f;\mu)=\inf_{\lambda\geq 0}\{L^{*}(\lambda)+\hat{\mu}(\sup_{\hat{y}}\{\hat{f}(\hat{y})-\hat{c}(\hat{x},\hat{y})\})\},
	\end{equation*}
	where \(\hat{\mu}\in\scrP(\widehat{\cX})\) is the nested distribution associated to \(\bbX=(\cX,\cF,\bF,\mu,X)\) and \(\hat{c}\) is the cost function given in Proposition~\ref{prop-4-bt}.
\end{thm}
\begin{proof}
	\emph{Step 1.} We first show \(\bos(f;\mu)=\bbos(f;\bbX)\).
	Since we assume \(\cX\) has no isolated points, by \citet[Theorem 5.4]{bartl24Wasserstein} \(\NP\) is a dense subset of \(\AP\) in \(\AW_{p}\).
	By Assumption~\ref{asmp-4-os}  (ii) and \citet[Theorem 3.6]{eckstein24Computational},   \(\bbY\mapsto\cT_{\c}(\bbX,\bbY)\)  are continuous with respect to \(\AW_{p}\).
	Moreover, by Assumption~\ref{asmp-4-os} (iii) and \citet[Proposition 6.1 (ii)]{bartl24Wasserstein}, the value of optimal stopping problem is continuous in \(\AW_{p}\).
	These properties yield
	\begin{align*}
		\bbos(f;\bbX) & =\sup_{\bbY\in\AP}\{\sup_{\tau\in \scrT^{\bbY}} \E_{Q}[f_{\tau}(Y_{0:\tau})]-L(\cT_{\bc}(\bbX,\bbY))\}              \\
		              & =\sup_{\bbY\in\NP}\{\sup_{\tau\in \scrT^{\bbY}} \E_{Q}[f_{\tau}(Y_{0:\tau})]-L(\cT_{\bc}(\bbX,\bbY))\}=\bos(f;\mu).
	\end{align*}

	\emph{Step 2.} We now derive the duality formula for \(\bbos(f;\bbX)\).
	Notice by Snell envelope theorem, we have
	\begin{equation*}
		\sup_{\tau\in\scrT^{\bbY}}\E_{Q}[f_{\tau}(Y_{0:\tau})]=\E_{\widehat{Q}}[\hat{f}(\widehat{X})],
	\end{equation*}
	where \(\widehat{Q}\)  is the nested distribution associated to \(\bbY\).
	Combining this with Proposition~\ref{prop-4-bt} yields
	\begin{align*}
		\bbos(f;\bbX) & =\sup_{\bbY\in\AP}\{\sup_{\tau\in \scrT^{\bbY}} \E_{Q}[f_{\tau}(Y_{0:\tau})]-L(\cT_{\bc}(\bbX,\bbY))\}                             \\
		              & \leq \sup_{\widehat{Q}\in \scrP(\widehat{\cX})} \{\E_{\widehat{Q}}[\hat{f}(\widehat{X})]-L(\widehat{\cT}(\hat{\mu},\widehat{Q})\}.
	\end{align*}
	On the other hand, for any \(\widehat{Q}\in\scrP(\widehat{\cX})\) we can construct an adapted stochastic process \(\bbY\) such that \(\widehat{Q}\) is the nested distribution associated to \(\bbY\).
	Indeed, we can take  \(\bbY=(\widehat{\cX}_{0:N},\cF,\bF, Q,  \widehat{X}^{-})\), where \(\cF=\cB(\widehat{\cX}_{0:N})\), \(\bF=(\cF_{n})_{n=0}^{N}\) with \(\cF_{n}=\sigma(\widehat{X}_{0:n})\),  \(\widehat{X}^{-}\) is the projection from \(\widehat{\cX}_{0:N}\) to \(\widehat{\cX}_{0:n}\), and
	\begin{equation*}
		Q(\md \hat{x}_{0}, \md \hat{x}_{1},\dots,\md \hat{x}_{N})= \widehat{Q}(\md \hat{x}_{0})\hat{x}_{0}^{+}(\md \hat{x}_{1})\dots \hat{x}_{N-1}^{+}(\md \hat{x}_{N}).
	\end{equation*}
	Therefore, we derive
	\begin{equation*}
		\bbos(f;\bbX)= \sup_{\widehat{Q}\in \scrP(\widehat{\cX})} \{\E_{\widehat{Q}}[\hat{f}(\widehat{X})]-L(\widehat{\cT}(\hat{\mu},\widehat{Q})\}.
	\end{equation*}
	The right-hand side is a static Wasserstein DRO problem.
	Applying Theorem~\ref{thm-4-disc} with \(N=0\), we derive the desired duality.
\end{proof}

\begin{proof}[Proof of Proposition~\ref{prop-4-bt}]
	We recursively define \(\hat{c}_{n}:\widehat{\cX}_{0:n}\times\widehat{\cX}_{0:n}\to\bbR\) for \(n\in I\).
	Let \(\hat{c}_{N}(\hat{x}_{0:N},\hat{y}_{0:N})=c(x^{-}_{0:n},y^{-}_{0:n})\) and
	\begin{equation*}
		\hat{c}_{n}(\hat{x}_{0:n},\hat{y}_{0:n})=\sup_{\pi\in \Pi(\hat{x}_{n}^{+},\hat{y}_{n}^{+})}\E_{\pi}[\hat{c}_{n+1}(\hat{x}_{0:n},\widehat{X}_{n+1},\hat{y}_{0:n},\widehat{Y}_{n+1})].
	\end{equation*}
	Notice that each \(\hat{c}_{n}\) is a classic optimal transport problem, and hence it is continuous by Assumption~\ref{asmp-4-os} (ii) and  \citet[Corollary 3.6]{bogachev21Optimal}.
	We recall the definition of \(\widehat{X}_{n}\) in Definition~\ref{defn-4-nested}
	\begin{equation*}
		\widehat{X}_{n}=(\widehat{X}_{n}^{-},\widehat{X}_{n}^{+}):=(X_{n},\Law(\widehat{X}_{n+1}^\bbX|\cF_{n}^{\bbX})).
	\end{equation*}
	Now we notice
	\begin{align*}
		\inf_{\pi\in \Pi_{\bc}(\bbX,\bbY)}\E_{\pi}[c(X,Y)] & = \inf_{\pi\in \Pi_{\bc}(\bbX,\bbY)}\E_{\pi}[\hat{c}_{N}(\widehat{X},\widehat{Y})]                                                                                                                  \\
		                                                   & = \inf_{\pi\in \Pi_{\bc}(\bbX,\bbY)}\E_{\pi}[\E_{\pi}[\hat{c}_{N}(\widehat{X},\widehat{Y})|
		\cF_{n-1}^{\bbX}\otimes \cF_{n-1}^{\bbY}]                                                                                                                                                                                                                \\
		                                                   & =\inf_{\pi\in \Pi_{\bc}(\bbX,\bbY)}\E_{\pi}[\sup_{\gamma\in \Pi(\widehat{X}_{n}^{+},\widehat{Y}_{n}^{+})}\E_{(\xi,\eta)\sim \gamma}[\hat{c}_{N}(\widehat{X}_{0:N-1},\xi,\widehat{Y}_{0:N-1},\eta)]] \\
		                                                   & =\inf_{\pi\in \Pi_{\bc}(\bbX,\bbY)}\E_{\pi}[\hat{c}_{N-1}(\widehat{X}_{0:N-1},\widehat{Y}_{0:N-1})].
	\end{align*}
	The last equality follows from an extension of \citet[Proposition 2.4]{backhoff-veraguas17Causal}.
	We apply the above argument repeatedly, and obtain
	\begin{equation*}
		\cT_{\bc}(\bbX,\bbY)=\inf_{\pi\in \Pi(\hat{P},\hat{Q})}\E_{\pi}[\hat{c}(\widehat{X},\widehat{Y})],
	\end{equation*}
	where we set \(\hat{c}=\hat{c}_{0}\).
\end{proof}

\section*{Acknowledgement}
The author would like to thank Prof. Jan Ob\l\'oj  and Fang Rui Lim for their support and insightful discussions.
The author acknowledges support by the EPSRC Centre for Doctoral Training in Mathematics of Random Systems: Analysis, Modelling, and Simulation (EP/S023925/1).

\bibliography{mybib}
\end{document}